\newtheorem{theorem}{Theorem}
\newtheorem{lemma}[theorem]{Lemma}
\newtheorem{fact}[theorem]{Fact}
\newtheorem{corollary}[theorem]{Corollary}
\newtheorem{observation}[theorem]{Remark}
\newtheorem{proposition}[theorem]{Proposition}
\newtheorem{definition}[theorem]{Definition}
\numberwithin{theorem}{section}
\numberwithin{equation}{section}
\DeclareMathOperator\dc{{\it dd^c}}
\DeclareMathOperator\we{\wedge}
\DeclareMathOperator\om{{\it\omega}}
\DeclareMathOperator\de{{\it\delta}}
\DeclareMathOperator\et{{\it\eta}}
\DeclareMathOperator\ep{{\it\epsilon}}
\DeclareMathOperator\Om{{\it\Omega}}
\DeclareMathOperator \caw {{\it cap_{\om}}}
\DeclareMathOperator \wph {{\it\om_{u}}}
\DeclareMathOperator \wphj {{\it\om_{u_j}}}
\DeclareMathOperator \wps {{\it\om_{v}}}
\DeclareMathOperator \wphn {{\it\om_{u}^n}}
\DeclareMathOperator \wpsn {{\it\om_{v}^n}}
\DeclareMathOperator \wpsh {{\it PSH(X,\om)}}
\DeclareMathOperator \lbr {{\it\lbrace}}
\DeclareMathOperator \rbr {{\it\rbrace}}
\DeclareMathOperator \eu {{\it e^{\phi}}}
\DeclareMathOperator \dd {\partial }
\DeclareMathOperator \inpp{{\it \int_{\lbrace u< v\rbrace}}}
\begin{document}
\title{Pluripotential estimates on compact Hermitian manifolds}
\subjclass[2000]{32U05, 32U40, 53C55}
\keywords{Monge-Amp\`ere operator, Hermitian manifold, pluripotential theory}
\author{S\l awomir Dinew, S\l awomir Ko\l odziej}

\maketitle

\begin{abstract}We discuss pluripotential aspects of the Monge-Amp\`ere equations on compact Hermitian manifolds and prove $L^{\infty}$ estimates for any metric, as well as the existence of weak solutions under an extra assumption.
\end{abstract}
\section{introduction}
The complex Monge-Amp\`ere equation is a fundamental tool in complex geometry. It appears in the problem of prescribing the Ricci curvature on compact K\"ahler manifold in a fixed K\"ahler class. The solution of Calabi conjecture by Shing-Tung Yau in 1976 (\cite{Y}), which  says that the complex Monge-Amp\`ere equation can always be solved for any smooth volume form satisfying the necessary normalization condition revolutionized complex differential geometry and is still an object of inspiration for future studies. Let us mention that the parabolic version of the (elliptic) Monge-Amp\`ere equation, namely the K\"ahler-Ricci flow is one of the central themes  in modern differential geometry. From the point of view of complex dynamics, in turn, the Monge-Amp\`ere equation produces potentials for singular measures and it is a central problem to relate the regularity of such potentials to the regularity properties of the measure itself (see, for example \cite{DNS}).

There are many modifications and generalizations in the existing literature. In particular, one can study the Monge-Amp\`ere equation with respect to a general Hermitian metric instead of a K\"ahler one. In such a case the equation is not so geometric, since Hermitian metrics do not represent positive cohomology classes (see however \cite{TW1} for some geometrical applications). On the other hand any compact complex manifold admits a Hermitian metric which is not the case for K\"ahler metrics. Thus for example Hopf manifolds are beyond the scope of the K\"ahler theory. Here we would like to mention the recent preprint of Streets and Tian \cite{StT}, where a Hermitian version of the K\"ahler-Ricci flow is being considered, aiming at important geometrical applications. The study of non-K\"ahler metrics is also motivated by physics as it is, for instance, in Fu and Yau paper \cite{FY}.

In complex dynamics positive non-closed currents are also studied. One can mention here the concept of a {\it harmonic current} which is not necessarily $d$  closed but $\dc$ closed. Laminations associated to such currents are studied for instance in \cite{FS}.

In the eighties and nineties some results regarding the Monge-Amp\`ere equation in the Hermitian setting were obtained by Cherrier and  Hanani (\cite{Ch}, \cite{CH1}, \cite{CH2}, \cite{H1}, \cite{H2}). For next few years there seems to be no activity on the subject until very recently, when the results were rediscovered and generalized by Guan-Li (\cite{GL},\cite{GL2}) and Tosatti-Weinkove (\cite{TW1}, \cite{TW2}). The PDE techniques applied in all these papers are similar to the a priori estimates due to S. T. Yau from the K\"ahler case (\cite{Y}). Therefore a natural question (posed in \cite{GL}, \cite{GL2} and \cite{TW1}) arises whether one can obtain $L^{\infty}$ estimates via suitably constructed pluripotential theory in this setting.

Indeed, there are obstructions for such a theory. First of all in the non-K\"ahler case there are no local potentials for the metric and hence the methods from \cite{koj98} as well as \cite{Bl1} do not directly apply. However it is still possible to work globally as in \cite{koj03}, that is independently of the local potentials. In such an approach the main difficulty that arises in transplanting the theory directly to Hermitian setting is the comparison principle which says the following:

{\it Let $\phi,\ \psi \in \wpsh\cap L^{\infty}(X)$. If $\om$\ is a K\"ahler metric then
$$\inpp\wpsn\leq\inpp\wphn.$$}

The aim of this paper is to develop some basic pluripotential techniques in the Hermitian setting and thereby prove the analogues of the results from \cite{koj05}. As the theory is in many aspects similar to the  K\"ahlerian one, we stress mainly those aspects where either differences occur and/or arguments need suitable modification.

After some preliminary material we discuss different variants of the comparison principle in the Hermitian setting. In particular we show that the assumptions used in the Guan and Li paper \cite{GL} are perfectly suitable for a meaningful pluripotential theory, while the assumption of $d$-balanced metrics (from \cite{TW1}) is not well adapted and causes some additional technical difficulties.

In Section \ref{apriori} we obtain $L^{\infty}$ a priori estimates for weak solutions to the Monge-Amp\`ere equation with Monge-Amp\`ere measure dominated by capacity, generalizing thus results in \cite{koj98}, \cite{koj03} from the K\"ahler setting. In particular solutions, provided that they exist, are bounded for all measures with $L^p$ densities ($p>1$). Another corollary is that some singular measures such as volumes of real hypersurfaces may also admit bounded potentials with respect to any hermitian metric. Since the details of the proof are technically involved below we sketch the main ideas.

Essentially we follow the proof for the K\"ahler case from \cite{koj03} (see also \cite{EGZ} for a similar argument). We have to cope with the lack of a comparison principle for a general hermitian metric. The first main idea (Theorem \ref{main1}) is that a comparison principle type estimate is available provided one can control the oscillation of the difference of the  functions involved and additionally one has to allow some error terms. These error terms involve Hessian type operators of lower order (such as Laplacians with respect to the metric). Contrary to the smooth case (\cite{TW1}, \cite{GL}) one cannot control these terms pointwise. Instead, the crucial observation (Lemma \ref{main2}) is that it is enough  to work only with  sublevel sets for levels close to infimum and then control the integrals of all such error terms over those sets by the Monge-Amp\`ere mass. Coupling these two results, as well as a uniform estimate for the the capacity of sublevel sets (Proposition \ref{capest}) one obtains a weak version of the fundamental Lemma 2.2 from \cite{koj03} which is satisfactory for our needs.

In the next section we assume that a Hermitian metric $\omega$ satisfies
 \begin{equation}
\forall u\in \wpsh\cap L^{\infty}(X)\ \ \int_X(\om+\dc u)^n=\int_X\om^n.
\end{equation}
 This assumption is equivalent to validity of the usual comparison principle. We then show the existence of continuous solutions of the degenerate Monge-Amp\`ere equation when the right hand side belongs to some Orlicz space, in particular when it is in $L^p$  ($p>1$). This result for smooth data, and without the extra hypothesis, was recently obtained by Tosatti-Weinkove in \cite{TW2}, building on Cherrier's \cite{Ch} who proved it for balanced metrics.

The last section contains some remarks regarding the uniqueness of solutions. In the smooth case this was very recently established by Tosatti and Weinkove (\cite{TW2}). We propose alternative elementary proof which is in the spirit of pluripotential theory. We believe that our approach has the slight advantage that it could be more easily adapted to non smooth setting.

The tools developed in this paper make it possible to extend stability and H\"older regularity theorems from \cite{koj03} \cite{koj08} to the Hermitian case at least with the above assumption. They can also be applied for manifolds with a boundary considered in \cite{GL}. This will be done in  subsequent papers.

{\bf Notation.} As it is customary $C$ will denote different constants that may vary line-to-line. In arguments where a lot of constants appear we shall either enumerate or indexate such $C$'s in order to make a distinction between them. An exception is made for the special constant $B$ (see (\ref{B}) for its definition), since it is heavily linked to the geometry of the manifold and plays crucial role in the proofs.

{\bf Acknowledgements.} The research was done while the first named author was a Junior Research Fellow at the Erwin Schr\"odinger Institute, Vienna. He wishes to express his gratitude to this institution for the perfect working conditions as well as for the financial support.
 
 The second author partially supported by EU grant
MTKD-CT-2006-042360 and Polish   grants 189/6 PR UE/2007/7,
3679/B/H03/2007/33.
\section{preliminaries}
Throughout the note $X$ will be a fixed compact Hermitian manifold equipped with a fixed Hermitian metric $\om$ and $n=dim_{\mathbb C}X$. Also $d=\partial+\overline{\partial}$\ denotes the standard operator of exterior differentiation while $d^c:=i/2\pi(\overline{\partial}-\partial)$.

Let the universal constant $B>1$ satisfy the following inequalities:
\begin{equation}\label{B}
 -B\om ^2 \leq n\dc\om\leq B\om ^2,\ \ -B\om ^3 \leq n^2d\om\we d^c\om\leq B\om ^3 .
\end{equation}

We define the function class
\begin{equation*}
 PSH(X,\omega):=\lbrace u \in L^1(X,\omega):dd^c u\geq -\omega,\ u \in\mathcal C^{\uparrow}(X) \rbrace,
\end{equation*}
where $\mathcal C^{\uparrow}(X)$\ denotes the space of upper semicontinuous functions and the inequality is understood in the weak sense of currents. We call the functions that belong to $PSH(X,\omega)$ $\omega$-plurisubharmonic ($\om$-psh for short). We shall often use the handy notation $\wph:=\om+dd^c u$.

Contrary to the K\"ahler case the form $\om$\ need not have local potentials, nevertheless $\om$-psh  functions are locally standard plurisubharmonic functions plus some smooth function. This follows from the fact that in normal coordinates (see e.g. \cite{StT} or \cite{GL}) at a given point $\om =i\sum dz_j \we d\bar{z} _j$. We shall refer to that fact in the sequel as to  \it psh-like property  \rm of $\om$-psh  functions.
This property allows to use some local results from pluripotential theory developed by Bedford and Taylor in \cite{BT2}. In particular the Monge-Amp\`ere operator
$$\wphn:=\om_{u}\we\cdots\we\om_{u}$$
 is well defined  for bounded $\om$-psh functions.
 Furthermore, if  $u_j \in PSH(\om )\cap L^{\infty}$ is either decreasing or increasing almost everywhere to $u$, then
 $$
 (dd^c u_j +\om )^n \to (dd^c u +\om )^n \ $$
 in the sense of currents. This follows from the convergence theorems in \cite{BT2} via the following argument.
   Suppose $\om$ is a  Hermitian form in a ball $B$, and $\Om$ a K\"ahler form
 such that $ \om <\Om .$ Write
 $$dd^c u_j +\om = dd^c u_j +\Om - T, \ \  T=(\Om -\om ).$$
Then by the Newton expansion
\begin{equation}\label{de}
(dd^c u_j +\om )^n =  (dd^c u_j +\Om )^n -n (dd^c u_j +\Om )^{n-1}\we T + ... \pm T^n .
\end{equation}
By the convergence theorem for psh functions  \cite{BT2} all the terms on the right converge as currents, and the sum of their limits is
$$
(dd^c u +\Om )^n -n (dd^c u +\Om )^{n-1}\we T +... \pm T^n =(dd^c u +\om )^n .$$

We note that {\it all} functions $u$ in $\wpsh$, normalized by the condition $sup_X u=0$ are uniformly integrable. This follows from classical  results in potential theory and psh-like property as in \cite{koj98}.
Since such results seem to be important in a more general setting (compare \cite{TWY}) we give here a complete argument following quite closely the one in \cite{GZ1}, where the authors treat the K\"ahler case.
\begin{proposition}\label{l1int}
 Let $u\in\wpsh$ be a function satisfying $sup_X u=0$. Then there exists a constant $C$ dependent only on $X,\ \om$ such that
$$\int_X| u|\om^n\leq C.$$
\end{proposition}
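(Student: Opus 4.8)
The plan is to reduce the global estimate to a purely local one for ordinary plurisubharmonic functions, exploiting the \emph{psh-like property} recalled above, and then to propagate local control across a finite cover of $X$. The only analytic input will be the classical sub-mean value inequality $w(a)\le\frac{1}{|B(a,r)|}\int_{B(a,r)}w\,dV$ for plurisubharmonic $w$.

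First I would fix a finite cover of $X$ by coordinate charts in which each point has a neighbourhood biholomorphic to $B(0,3)\subset\mathbb C^n$, together with concentric balls whose images $B_1',\dots,B_N'$ (corresponding to $B(0,1)$ in each chart) still cover $X$. On each chart the psh-like property furnishes a fixed smooth $\rho_k$, depending only on $\omega$ and not on $u$, with $dd^c\rho_k\ge\omega$, so that $w:=u+\rho_k$ is genuinely plurisubharmonic; since $u\le0$ and $\rho_k$ is bounded, $w\le S$ for a constant $S$ independent of $u$.

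The argument then rests on two elementary local facts. \emph{(i)} If $w\le0$ is plurisubharmonic on $B(0,3)$ and $a\in B(0,1)$ satisfies $w(a)\ge-A$, then $B(0,1)\subset B(a,2)\subset B(0,3)$, and the sub-mean value inequality gives $\int_{B(0,1)}(-w)\,dV\le\int_{B(a,2)}(-w)\,dV\le|B(a,2)|\,A$; thus a single controlled value yields $L^1$ control on the whole inner ball. \emph{(ii)} Conversely, if $\int_V(-w)\,dV\le A'$ on an open set $V$ of measure at least $\tau>0$, then $w(a)\ge-A'/\tau$ for some $a\in V$, since otherwise the integral would exceed $A'$. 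Translating back through $u=w-\rho_k$ and the uniform bounds on $\rho_k$ and $S$, both statements hold verbatim for $\int|u|$, with constants depending only on $X,\omega$.

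Finally I would globalize. Since $\sup_Xu=0$ and $u$ is upper semicontinuous on the compact space $X$, the supremum is attained at some $x_0$, lying in one of the balls $B_{k_0}'$; there $w(x_0)=\rho_{k_0}(x_0)$ is bounded below uniformly, so fact (i) bounds $\int_{B_{k_0}'}|u|$ by a constant independent of $u$. Using connectedness of $X$, I then walk from $B_{k_0}'$ to any other $B_k'$ through a chain of at most $N$ charts with overlaps of positive measure: at each step the already-established $L^1$ bound on the overlap produces, via (ii), a point with controlled value in the next chart, and (i) upgrades it to an $L^1$ bound on the next ball. After at most $N$ steps every $B_k'$ is covered, and summing gives $\int_X|u|\,\omega^n\le C$. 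The main point to watch is uniformity: the peak ball $B_{k_0}'$ and the order of the chain depend on $u$, but since there are only finitely many charts and each propagation step multiplies the constant by a fixed geometric factor, the final $C$ depends only on the cover, hence only on $X$ and $\omega$. Connectedness of $X$ is what makes the chaining possible, and it is genuinely needed, since a very negative constant supported on a separate component would otherwise violate the bound.
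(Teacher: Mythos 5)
Your proof is correct, and it follows a genuinely different route from the paper's. The paper argues by contradiction and compactness, in the style of Guedj--Zeriahi: assuming a sequence $u_j$ with $\sup_X u_j=0$ and $\int_X|u_j|\omega^n\ge 2^j$, it localizes at the (sub-sequentially common) peak ball, uses the sub-mean value inequality once to bound $\int_{B^2_s}|u_j|$ there, and then forms $v=\sum_j 2^{-j}u_j$, whose local integrability (as a non-identically-$-\infty$ $\omega$-psh function) contradicts the assumed blow-up; the propagation from one chart to the rest is delegated to the classical fact that a psh function on a connected open set is either $\equiv-\infty$ or $L^1_{\mathrm{loc}}$. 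You instead make the propagation explicit and quantitative: the same sub-mean value inequality, applied at the peak point and then iterated along a chain of overlapping charts (your facts (i) and (ii)), yields a direct bound with a constant that is in principle computable from the cover. What your approach buys is constructiveness and self-containedness --- you do not need the $\sum 2^{-j}u_j$ trick nor the dichotomy theorem for psh functions, only the sub-mean value property --- at the cost of a slightly longer bookkeeping argument about chains and overlaps. Your attention to uniformity (finitely many possible starting balls and chains) and to the necessity of connectedness is exactly the right thing to check; the paper uses connectedness implicitly at the step ``reasoning like in the fixed ball $B^2_s$ we easily obtain that $v\in L^1(B^1_t)$ for any $t$,'' which is precisely the chaining you spell out.
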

 \begin{proof} Consider a double covering of $X$ by coordinate balls $B_{s}^1\subset\subset B_{s}^2\subset X,\ s=1,\cdots, N$. In each $B_{s}^2$ there exists a strictly plurisubharmonic potential $\rho_s$ satisfying the following properties:
\begin{equation}\label{potentials}
 \begin{cases}
  \rho_s|_{\partial B_s^2}=0\\
  inf_{B_s^2}\rho_s\geq-C_1\\
  \dc\rho_s=\om_{2,s}\geq\om,
 \end{cases}
\end{equation}
where $C_1$ is a constant dependent only on the covering and $\om$.
Suppose now that there exist a sequence $u_j\in\wpsh, sup_X u_j=0$ satisfying $lim_{j\rightarrow\infty}\int_X| u_j|\om^n=\infty$. After choosing subsequence (which for the sake of brevity we still denote by $u_j$) we may assume that
\begin{equation}\label{l1counterproof}
\int_X| u_j|\om^n\geq 2^j
\end{equation}
and moreover a sequence of points $x_j$ where $u_j$ attains maximum is contained in some fixed ball $B_{s}^1$.

Note that $\rho_s+u_j$\ is an ordinary plurisubharmonic function in $B_s^2$ and by the sub mean value property one has
\begin{equation}\label{1}
 \rho_s(x_j)=\rho_s(x_j)+u_j(x_j)\leq C_2\int_{B_s^2} \rho_s(z)+u_j(z)d\lambda\leq C_2\int_{B_s^2} u_j(z)d\lambda+C_3,
\end{equation}
where $d\lambda$ is the Lebesgue measure in the local coordinate chart, while $C_2,\ C_3$ are constants dependent only on $B_s^1$ and $B_s^2$. Thus (\ref{1}) implies that for some constant $C_4$\ one has
\begin{equation}\label{2}
\int_{B_s^2} |u_j(z)|d\lambda\leq C_4.
\end{equation}
Consider the function $v:=\sum_{j=1}^{\infty}\frac{u_j}{2^j}$. By classical potential theory this is again a $\om$-psh function or constantly $-\infty$. By (\ref{2}) however the integral of $v$ over $B_s^2$ is finite thus it is a true $\om$-psh function. Reasoning like in the fixed ball $B_s^2$ we easily obtain that $v\in L^1(B_t^1)$ for any $t\in 1,\cdots, N$ and hence $v\in L^1(X)$. This  contradicts (\ref{l1counterproof}), and thus the existence of an uniform bound is established.
 \end{proof}

Next we define a capacity in the Hermitian setting (\cite{koj03}):
\begin{definition}
 For any Borel set $E\subset X$ we define the capacity $\caw$ by the formula
$$\caw(E):=sup\lbr\int_E\wphn| u\in\wpsh,\ 0\leq u\leq1\rbr.$$
\end{definition}

Contrary to the K\"ahler case it is not obvious that this is always a finite quantity. This follows however from the next proposition.
\begin{proposition}
 For any $k\in\lbr0,\cdots,n\rbr$ the quantity
$$sup\lbr\int_X\wph^k\we\om^{n-k}| u\in\wpsh,\ 0\leq u\leq1\rbr$$
is finite.
\end{proposition}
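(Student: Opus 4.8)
The plan is to establish the stronger \emph{uniform} bound by induction on $k$, controlling the ``mixed masses'' $I_k:=\int_X\wph^k\we\om^{n-k}$ in terms of those of lower order, with the error terms governed entirely by the universal constant $B$ of (\ref{B}). The case $k=0$ is trivial, $I_0=\int_X\om^n$ being a fixed finite number independent of $u$. For the inductive step I would peel off one factor of $\wph=\om+\dc u$,
\[
I_k=\int_X(\om+\dc u)\we\wph^{k-1}\we\om^{n-k}=I_{k-1}+\int_X\dc u\we\wph^{k-1}\we\om^{n-k},
\]
so that the entire difficulty concentrates in the last integral, which in the K\"ahler case would simply vanish by Stokes.

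The key step is integration by parts. With $\beta:=\wph^{k-1}\we\om^{n-k}$ and $X$ compact without boundary (so that $\int_X d(\cdot)=\int_X d^c(\cdot)=0$ on forms of top degree minus one), I would move \emph{both} differentials onto $\beta$ to get $\int_X\dc u\we\beta=\int_X u\,\dc\beta$. Carrying out both integrations by parts is essential: a single one leaves a term $\int_X d^c u\we d\beta$ that cannot be controlled by the mere boundedness of $u$, whereas after the second no derivative of $u$ survives. Since $u$ is only bounded $\om$-psh, this identity must be justified via the local Bedford--Taylor calculus together with the psh-like property (or by regularization, using the monotone convergence of Monge--Amp\`ere products recalled above); I expect this to be the main obstacle, because the currents $\wph^j$ are not closed and $\om$ is not $d$-closed, so the familiar closed-current formula does not apply verbatim.

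Next I would expand $\dc\beta$ by Leibniz, using the identities $d\wph=d\om$ and $d^c\wph=d^c\om$ (valid because $d(\dc u)=0$ and $d^c(\dc u)=0$). A short computation gives $d^c\beta=\gamma\we d^c\om$ and then
\[
\dc\beta=(d\om\we d^c\om)\we\de+\dc\om\we\gamma,
\]
where $\gamma$ and $\de$ are \emph{nonnegative} integer combinations of products $\wph^a\we\om^b$ with $a\le k-1$; thus all derivatives of $u$ have disappeared and only the smooth torsion forms $\dc\om$ and $d\om\we d^c\om$ remain. Now the defining inequalities (\ref{B}), $-B\om^2\le n\,\dc\om\le B\om^2$ and $-B\om^3\le n^2\,d\om\we d^c\om\le B\om^3$, may be wedged with the positive currents $\gamma,\de$ to yield
\[
-\tfrac{B}{n}\om^2\we\gamma-\tfrac{B}{n^2}\om^3\we\de\ \le\ \dc\beta\ \le\ \tfrac{B}{n}\om^2\we\gamma+\tfrac{B}{n^2}\om^3\we\de .
\]

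Finally, since $0\le u\le1$, multiplying by $u$ and integrating gives $\big|\int_X u\,\dc\beta\big|\le\tfrac{B}{n}\int_X\om^2\we\gamma+\tfrac{B}{n^2}\int_X\om^3\we\de$, and the right-hand side is a positive combination, with combinatorial coefficients depending only on $n$ and $k$, of the masses $I_a$ with $a\le k-1$. Hence $I_k\le I_{k-1}+C_{n,k}\,B\sum_{a=0}^{k-1}I_a$, which by the induction hypothesis is bounded by a constant depending only on $X$ and $\om$ (through $B$ and $\int_X\om^n$) and not on $u$. Taking the supremum over all admissible $u$ proves the proposition; in particular the case $k=n$ bounds all Monge--Amp\`ere masses $\int_X\wphn$ uniformly, whence $\caw$ is finite, which was the purpose of the statement.
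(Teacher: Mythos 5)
Your argument is correct and is essentially the paper's own proof: the same induction on $k$, peeling off one factor of $\om_u$, double integration by parts to land on $\int_X u\,\dc(\om_u^{k-1}\we\om^{n-k})$, Leibniz expansion using $d\om_u=d\om$, and the two-sided bounds (\ref{B}) wedged with positive currents to reduce to lower-order masses. The only cosmetic difference is that the paper writes out the Leibniz expansion explicitly (with its various signs) rather than packaging it into $\gamma$ and $\de$, but the estimate is identical.
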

\begin{proof}
 For $k=0$ the statement trivially holds. Assume now $k>0$ and let us fix a function $u$ which is a competitor for the supremum. Then one has the following
\begin{align*}
 &\int_X\wph^k\we\om^{n-k}=\int_X\wph^{k-1}\we\om^{n-k+1}+\int_X\dc u\we \wph^{k-1}\we\om^{n-k}\\
&=\int_X\wph^{k-1}\we\om^{n-k+1}+\int_X u\dc[\wph^{k-1}\we\om^{n-k}]=\int_X\wph^{k-1}\we\om^{n-k+1}\\
&+\int_X u[(k-1)\dc\om\we\wph^{k-2}\we\om^{n-k}-(k-1)(k-2)d\om\we d^c\om\we\wph^{k-3}\we\om^{n-k}\\
&-(k-1)(n-k)d^c\om\we\wph^{k-2}\we d\om\we\om^{n-k-1}+(k-1)(n-k)d\om\we\wph^{k-2}\we d^c\om\we\om^{n-k-1}\\
&-(n-k)(n-k-1)\wph^{k-1}\we d\om\we d^c\om\we\om^{n-k-2}+(n-k)\wph^{k-1}\we\dc\om\we\om^{n-k-1}].
\end{align*}
This quantity can be estimated (recall $u$ is uniformly bounded) by
$$C\int_X[\wph^{k-1}\we\om^{n-k+1}+\wph^{k-2}\we\om^{n-k+2}+\wph^{k-3}\we\om^{n-k+3}]$$
(if $k<3$ the terms with negative power of $\wph$ do not appear). Now the proof is finished by induction.
\end{proof}

For $\om$ K\"ahler the capacity is equicontinuous with the Bedford-Taylor capacity denoted in \cite{koj03} or \cite{koj05} by $\caw '$. The latter can also be defined on non-K\"ahler manifolds and is equivalent to its non-K\"ahler counterpart by the decomposition (\ref{de}).
Finally, $\caw $ and $\caw '$ are equicontinuous in the Hermitian case
by the same proof as in \cite{koj03} except that in each strictly pseudoconvex domain $V_s$ one considers two local K\"ahler forms $\om_{1,s}$ and $\om_{2,s}$ satisfying $\om_{1,s}\leq\om\leq\om_{2,s}$ and works with the potentials of those metrics.

Coupling this fact with the argument from \cite{koj03} (Lemma 4.3) one obtains the following corollary:
\begin{corollary}\label{lpcap}
 Let $p>1$ and $f$\ be a non negative function belonging to $L^p(\om^n)$. Then for some absolute constant $C$ dependent only on $(X,\om)$ and any compact $K\subset X$ one has
$$\int_Kf\om^n\leq C(p,X)||f||_p\caw(K)exp(-C\caw^{-1/n}(K)),$$
with $C(p,X)$ a constant dependent on $p$\ and $(X,\om)$.
\end{corollary}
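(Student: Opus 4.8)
The plan is to transplant the proof of Lemma 4.3 from \cite{koj03} to the Hermitian setting, the only new ingredients being the psh-like property of $\om$-psh functions and the equicontinuity (mutual domination) of $\caw$ with the Bedford-Taylor capacity $\caw'$ established just above. First I would reduce the global statement to a purely local one. Covering $X$ by the finitely many strictly pseudoconvex charts $V_s$ used in the equicontinuity argument, and splitting $K$ accordingly, it suffices to bound $\int_{K\cap V_s}f\om^n$ for each $s$. On each $V_s$ the psh-like property replaces $\om$-psh functions by genuine plurisubharmonic ones (up to a fixed smooth correction), so the classical Bedford-Taylor theory applies and one may work with the local capacity $\mathrm{cap}(\cdot,V_s)$. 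The two-sided sandwich $\om_{1,s}\leq\om\leq\om_{2,s}$ by local K\"ahler forms shows this local capacity is comparable to $\caw'$ restricted to the chart, with constants depending only on the finite covering; by the equicontinuity proved above $\caw'$ is in turn comparable to $\caw$. Hence every capacity appearing may be taken to be $\caw$, at the cost of adjusting $C(p,X)$ and the constant in the exponent.

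The local estimate itself rests on two facts. Applying H\"older's inequality with exponents $p$ and $q=p/(p-1)$, and absorbing the smooth density of $\om^n$ into the constant, gives
$$\int_{K\cap V_s}f\,d\lambda\leq ||f||_p\,\lambda(K\cap V_s)^{1/q},$$
so it remains only to control the Lebesgue measure of a compact set by its capacity. This is the Alexander--Taylor volume-capacity inequality: through the relative (or Siciak) extremal function one has $\lambda(K)\leq C\exp(-c\,\mathrm{cap}(K,V_s)^{-1/n})$, the exponential decay reflecting that sets of small capacity are pluripotential-theoretically thin. Feeding this into the H\"older bound and translating back to $\caw$ through the comparisons above produces exactly the factor $\exp(-C\caw^{-1/n}(K))$. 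The extra linear factor $\caw(K)$ is then merely a repackaging: for small capacity one can always peel a polynomial factor off the exponential, $\exp(-c\,\caw^{-1/n})\leq \caw(K)\exp(-c'\caw^{-1/n})$, which only tightens the bound since $\caw(K)$ stays bounded.

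The main obstacle is not the computation but making the patching rigorous: one must verify that the passage between the intrinsic Hermitian capacity $\caw$, the global Bedford-Taylor capacity $\caw'$, and the local capacities $\mathrm{cap}(\cdot,V_s)$ is uniform over the finite covering and preserves the \emph{form} of the estimate. Since all three comparisons are already in hand --- the first two by equicontinuity, the last by the K\"ahler sandwich $\om_{1,s}\leq\om\leq\om_{2,s}$ --- this reduces to careful bookkeeping of constants, and the exponential structure of the bound is robust under the (polynomially bounded) distortions these comparisons introduce. In particular no global potential for $\om$ is ever needed, which is precisely the point where the Hermitian case departs from \cite{koj03}.
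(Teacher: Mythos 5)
Your proposal is correct and follows essentially the same route the paper intends: the text derives this corollary by citing Lemma 4.3 of \cite{koj03} together with the just-established equicontinuity of $\caw$, $\caw'$ and the local capacities via the K\"ahler sandwich $\om_{1,s}\leq\om\leq\om_{2,s}$, which is precisely the H\"older--Alexander--Taylor argument plus capacity bookkeeping that you unpack. The one detail worth recording explicitly is that peeling the factor $\caw(K)$ off the exponential uses the uniform bound $\caw(K)\leq\caw(X)<\infty$, which in the Hermitian setting is not automatic and is supplied by the finiteness proposition preceding the corollary.
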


As yet another consequence of psh-like property of $\om-psh$ functions
one gets the capacity estimate of sublevel sets of those functions.

\begin{proposition}\label{capest}
 Let $u\in\wpsh, sup_X u=0$. Then there exists an independent constant $C$\ such that for any $s>1$ $\caw(\lbr u<-t\rbr)\leq\frac Ct$.
\end{proposition}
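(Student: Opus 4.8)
The plan is to combine a Chebyshev-type inequality with a uniform bound on the weighted Monge--Amp\`ere mass $\int_X(-u)\wpsn$, where $v$ ranges over competitors for the capacity. First I would reduce to bounded $u$. Since $\ma(u,-N)\in\wpsh$ has supremum $0$ and satisfies $\lbr u<-t\rbr=\lbr \ma(u,-N)<-t\rbr$ for every $N>t$, replacing $u$ by $\ma(u,-N)$ alters neither the sublevel set nor its capacity, and by Proposition \ref{l1int} the norm $\int_X|\ma(u,-N)|\om^n$ stays bounded by a constant independent of $N$. Thus it suffices to prove the estimate for bounded $u$ with a constant that does not depend on the bound. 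For such $u$ and any $v\in\wpsh$ with $0\leq v\leq1$ we have $-u>t$ on $\lbr u<-t\rbr$, so
\begin{equation*}
\int_{\lbr u<-t\rbr}\wpsn\leq\frac1t\int_{\lbr u<-t\rbr}(-u)\wpsn\leq\frac1t\int_X(-u)\wpsn .
\end{equation*}
Taking the supremum over $v$ reduces the proposition to the claim that $\int_X(-u)\wpsn\leq C$ with $C$ depending only on $(X,\om)$.

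To prove the claim I would bound the weighted mixed masses $\int_X(-u)\wps^k\we\om^{n-k}$ by induction on $k$, the case $k=0$ being exactly Proposition \ref{l1int}. For the inductive step write $\wps^k\we\om^{n-k}=\wps^{k-1}\we\om^{n-k+1}+\dc v\we\wps^{k-1}\we\om^{n-k}$; the first summand is controlled by the inductive hypothesis, so the issue is the second. Setting $T:=\wps^{k-1}\we\om^{n-k}$ and integrating by parts on the closed manifold $X$ (legitimate for the locally standard potentials supplied by the psh-like property, after the usual Bedford--Taylor regularisation) I move $\dc$ off $v$ onto $-u$:
\begin{equation*}
\int_X(-u)\dc v\we T=\int_X v\,\dc(-u)\we T+\int_X\bigl((-u)d^cv+v\,d^cu\bigr)\we dT .
\end{equation*}
Since $\dc(-u)=\om-\wph\leq\om$ and $0\leq v\leq1$, the first term on the right is dominated by $\int_X\wps^{k-1}\we\om^{n-k+1}$, finite by the mixed-mass bounds established above.

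The crux is the error integral $\int_X((-u)d^cv+v\,d^cu)\we dT$, and this is precisely where the non-K\"ahlerness of $\om$ enters: because $\wps$ and $\om$ are not closed we have $dT\neq0$, and in fact $dT$ is a combination of $d\om$ wedged with the positive forms $\wps^{k-2}\we\om^{n-k}$ and $\wps^{k-1}\we\om^{n-k-1}$. Each resulting term has the shape $\int_X\varphi\,d^c\psi\we d\om\we W$ with $W\geq0$, $\varphi\in\lbr-u,v\rbr$ and $\psi\in\lbr u,v\rbr$. The device that keeps the argument from degenerating is to split the weight as $\sqrt{-u}\cdot\sqrt{-u}$ before applying Cauchy--Schwarz, so that $-u$ enters only linearly:
\begin{equation*}
\Bigl|\int_X(-u)\,d^c\psi\we d\om\we W\Bigr|\leq\Bigl(\int_X(-u)\,d\psi\we d^c\psi\we W\Bigr)^{1/2}\Bigl(\int_X(-u)\,d\om\we d^c\om\we W\Bigr)^{1/2},
\end{equation*}
with the analogous (and easier) estimate when the weight is the bounded function $v$. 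The second factor is handled by the second inequality in (\ref{B}), giving $(-u)\,d\om\we d^c\om\we W\leq\tfrac{B}{n^2}(-u)\,\om^3\we W$, a weighted mass of lower order in $\wps$; the first factor is a weighted Dirichlet energy which, by one further integration by parts together with $\dc v=\wps-\om$ (resp. $\dc u=\wph-\om$), again reduces to weighted masses.

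The main obstacle is therefore the bookkeeping of this recursion: each integration by parts spawns new lower-order error integrals through $d\om$, and one must check that the coupled induction over the weighted mixed masses $\int_X(-u)\,\wph^a\we\wps^b\we\om^c$ and the associated Dirichlet energies terminates with every constant controlled by $B$, $n$ and the base case Proposition \ref{l1int}, uniformly in the truncation level $N$. In the K\"ahler case $dT\equiv0$ and all these error terms vanish, which is why there the bound $\int_X(-u)\wpsn\leq C$ is immediate and the estimate $\caw(\lbr u<-t\rbr)\leq C/t$ follows at once.
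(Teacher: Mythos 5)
Your opening reduction is exactly the paper's: Chebyshev gives $\caw(\lbr u<-t\rbr)\le\frac1t\sup_v\int_X(-u)\om_v^n$ over competitors $v$, so everything hinges on the uniform bound $\int_X(-u)\om_v^n\le C$. The gap is in your proof of that bound. The global integration-by-parts recursion you sketch does not close, because the error terms it generates are not merely hard to bookkeep --- some of them are infinite for general $u\in\wpsh$ with $\sup_Xu=0$. Concretely, the piece $\int_X v\,d^cu\we dT$ of your error integral forces you, via Cauchy--Schwarz, to control the Dirichlet energy $\int_X du\we d^cu\we W$, and your ``one further integration by parts together with $dd^cu=\om_u-\om$'' converts the weighted energy $\int_X(-u)\,du\we d^cu\we W$ into terms of the shape $\int_X u^2\,\om_u\we W$. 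None of these is finite for a general $\om$-psh function: already for $u$ with a logarithmic pole, $u\sim\log|z_1|$, one has $\int du\we d^cu\we\om^{n-1}=\infty$, and the corresponding integrals for the truncations $\max(u,-N)$ blow up as $N\to\infty$. So the induction cannot terminate with constants controlled by $B$, $n$ and Proposition \ref{l1int} uniformly in $N$; the step you defer as ``bookkeeping'' is precisely where the argument fails. (The unweighted analogue $\sup\int_X\om_v^k\we\om^{n-k}<\infty$ does go through by this method only because there the function being differentiated is bounded between $0$ and $1$.)

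The paper avoids all of this by localizing. On a double covering by coordinate balls $B_s^1\subset\subset B_s^2$ it takes local K\"ahler potentials $\rho_s$ with $\dc\rho_s=\om_{2,s}\ge\om$; then $\om_v\le\dc(\rho_s+v)$ is dominated by a \emph{closed} positive current, and $\int_{B_s^1}(-u)\,\om_v^n\le\int_{B_s^1}-(u+\rho_s)\,(\dc(\rho_s+v))^n$ is estimated by the generalized $L^1$ Chern--Levine--Nirenberg inequality for genuine psh functions, yielding $C_{B_s^1,B_s^2}\|u+\rho_s\|_{L^1(B_s^2)}\|\rho_s+v\|_{L^\infty(B_s^2)}^n$, which is uniformly bounded by Proposition \ref{l1int}. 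Because the operators become closed after adding $\rho_s$, no $d\om$ error terms and no Dirichlet energies of $u$ ever appear. If you insist on a global argument you would have to arrange never to put a derivative on $u$ at all (for instance trading $\int v\,d^cu\we dT$ for $-\int u\,d^cv\we dT$ plus a zeroth-order term via Stokes), which is a genuinely different and more delicate computation than the one you describe.
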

\begin{proof}
 We shall use the double covering introduced in Proposition \ref{l1int}. Fix a function $v\in\wpsh, 0\leq v\leq 1$. Then we obtain
\begin{align*}
 &\int_{\lbr u<-t\rbr}\om_v^n\leq\frac1t\int_X- u\om_v^n\leq\frac1t(\sum_{s=1}^N\int_{B_s^1}-u(\om_{2,s}+\dc v)^n)\\
&\leq\frac1t(\sum_{s=1}^N\int_{B_s^1}-(u+\rho_s)(\dc(\rho_s+v))^n).
\end{align*}
Now by the generalized $L^1$ Chern-Levine-Nirenberg inequalities (see, for example \cite{dem}, Proposition 3.11) applied to each pair $B_s^1\subset\subset B_s^2$ one  obtains that the last quantity can be estimated by
$$\frac1t\sum_{s=1}^NC_{B_s^1, B_s^2}|| u+\rho_s||_{L^1(B_s^2)}||\rho_s+v||_{L^{\infty}(B_s^2)}\leq\frac1tmax_s\lbr C_{B_s^1, B_s^2}\rbr(C_5N\int_X- u\om^n+C)(C+1)^n,$$
where constants $C_{B_s^1, B_s^2}$ depend on the covering, while $C_5$ - only on $(X,\om)$. By Proposition \ref{l1int} this quantity is uniformly bounded and the statement follows.
\end{proof}

We finish this preliminary section with a  lemma which shall be used throughout the note. It follows from the proof of the comparison principle by Bedford and Taylor in \cite{BT1}.
\begin{lemma}\label{bedtay}
 Let $u,\ v$\ be $\wpsh$ functions and $T$ a (positive but non necessarily closed) current of the form $\om_{u_1}\we\cdots\we\om_{u_{n-1}}$ for bounded functions $u_i$ belonging to $\wpsh$. Then
$$\inpp\dc(u- v)\we T\geq\inpp d^c(u-v)\we dT.$$
\end{lemma}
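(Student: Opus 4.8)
The plan is to reproduce Bedford and Taylor's proof of the comparison principle, the only new feature being that since $T$ is \emph{not} closed an extra term survives the integration by parts; this term is precisely $\inpp d^c(u-v)\we dT$, and in the K\"ahler (closed) case it vanishes and one recovers the classical statement. Throughout I would exploit that, although $T$ is not closed, its exterior derivative is harmless: since $d\om_{u_i}=d(\om+\dc u_i)=d\om$ is smooth (because $d\,\dc u_i=0$), the current $dT=\sum_i\om_{u_1}\we\cdots\we d\om\we\cdots\we\om_{u_{n-1}}$ is a sum of products of bounded-potential currents with one smooth factor $d\om$, so all the wedge products occurring below are well defined in the sense of \cite{BT2} and the psh-like property.

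First I would set $w:=\max(u,v)$, which again belongs to $\wpsh$, and record $\om_w:=\om+\dc w\geq0$ together with the localization identities $\mathbf 1_{\{u<v\}}\om_w\we T=\mathbf 1_{\{u<v\}}\wps\we T$ and $\mathbf 1_{\{u>v\}}\om_w\we T=\mathbf 1_{\{u>v\}}\wph\we T$, which are local Bedford--Taylor facts. Applying the Leibniz rule $d(d^c\phi\we T)=\dc\phi\we T-d^c\phi\we dT$ to $\phi:=w-u$ and integrating over the closed manifold $X$, so that $\int_X d(\,\cdot\,)=0$ by Stokes, I obtain the master identity $\int_X\dc(w-u)\we T=\int_X d^c(w-u)\we dT$.

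Next I would evaluate both sides by decomposing $X$ into $\{u<v\}$, $\{u=v\}$, $\{u>v\}$. On $\{u>v\}$ one has $w=u$, so $\dc(w-u)\we T=0$; on $\{u<v\}$ the localization gives $\dc(w-u)\we T=(\wps-\wph)\we T=\dc(v-u)\we T$; and on the contact set $\{u=v\}=\{w=u\}$ the quantity $\int_{\{u=v\}}(\om_w-\wph)\we T$ is \emph{nonnegative}, because $w-u\geq0$ attains its minimum there, whence $\om_w-\wph\geq0$ as measures paired against the positive current $T$ — this is exactly the positivity of the boundary term in Bedford and Taylor's argument. For the right-hand side, since $w-u=(v-u)^+$ one has $d^c(w-u)=\mathbf 1_{\{u<v\}}d^c(v-u)$, so $\int_X d^c(w-u)\we dT=\inpp d^c(v-u)\we dT$. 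Collecting the pieces gives
$$\inpp\dc(v-u)\we T+\int_{\{u=v\}}(\om_w-\wph)\we T=\inpp d^c(v-u)\we dT,$$
and dropping the nonnegative contact term and replacing $v-u$ by $-(u-v)$ yields the asserted inequality $\inpp\dc(u-v)\we T\geq\inpp d^c(u-v)\we dT$.

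The main obstacle is the rigorous justification of the integration by parts and of the two localization reductions when $u,v$ are merely bounded rather than continuous: one must approximate $u,v$ from above by smoother $\om$-psh functions, verify via the convergence theorems quoted in the preliminaries (together with the smoothness of $d\om$) that all wedge products with the non-closed $T$ and with $dT$ pass to the limit, and check the identity $d^c(w-u)=\mathbf 1_{\{u<v\}}d^c(v-u)$ in the measure sense appropriate for pairing against $dT$, i.e.\ that $d^c(v-u)$ carries no relevant mass on the level set $\{u=v\}$. The conceptual heart is the nonnegativity of the contact-set term against $T$; this, however, is a purely local statement and reduces to the classical Bedford--Taylor fact, so the genuinely Hermitian content is merely the careful transport of the $dT$ term through the computation.
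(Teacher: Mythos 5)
Your argument is correct in outline, but it takes a genuinely different route from the one the paper intends. The paper gives no written proof: it simply invokes ``the proof of the comparison principle by Bedford and Taylor in [BT1]'', i.e.\ the scheme of integrating $d\bigl(d^c(u-v)\we T\bigr)$ over the sublevel sets $\{u<v-\epsilon\}$ and observing that the resulting boundary term $\int_{\partial\{u<v-\epsilon\}}d^c(u-v)\we T$ is nonnegative --- exactly the quantity the authors later denote by $A$ in Section~3. You instead run the other standard proof of the same classical fact: work globally on the closed manifold with $w=\max(u,v)$, so Stokes produces no boundary term, and place the discarded nonnegative error on the contact set $\{u=v\}$ rather than on $\partial\{u<v-\epsilon\}$. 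What your version buys is that it avoids the very irregular boundaries of the sublevel sets and exhibits the positive remainder explicitly as $\int_{\{u=v\}}(\om_w-\wph)\we T$; what it costs is that you must supply three local facts for the non-closed $T$: plurifine locality of $\phi\mapsto\om_{\phi}\we T$ on the non-open sets $\{u<v\}$ and $\{u>v\}$ (neither is open when $u,v$ are merely bounded); the Demailly-type inequality $\mathbf{1}_{\{u=v\}}\om_w\we T\geq\mathbf{1}_{\{u=v\}}\wph\we T$, for which ``$w-u$ attains its minimum there'' is not a proof for measure-valued Hessians --- one should approximate by $\max(u+\epsilon,v)\downarrow w$, use locality on $\{u+\epsilon>v\}$ and pass to the weak limit; and the vanishing of the mass of $d^c(w-u)\we dT$ on $\{w=u\}$, which does not follow from the Sobolev chain rule alone since $dT$ need not be absolutely continuous, but requires the standard lemma that $d(u-v)\we d^c(u-v)\we\Theta$ does not charge $\{u=v\}$ for positive $\Theta$ with bounded potentials, together with Cauchy--Schwarz. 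You flag all three honestly; note that the last one is precisely the same local input that makes the boundary term in [BT1] nonnegative, so the two proofs ultimately rest on identical pluripotential-theoretic facts.
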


\section{comparison principles}
In this section we establish the comparison principle
in various forms in the non-K\"ahler case. It has the same form as for K\"ahler
forms (comp. \cite{koj03}) under an extra assumption (\ref{necessary}) below.
Otherwise we get some extra terms, but this general form is sufficient for later applications.

Note that since for any bounded $\om$-psh function $u$ we have for a suitable constant $C$ that $u-C<0<u+C$ a necessary condition for such an inequality to hold is the following one
\begin{equation}\label{necessary}
\forall u\in \wpsh\cap L^{\infty}(X)\ \ \int_X(\om+\dc u)^n=\int_X\om^n.
\end{equation}
Below we show that (\ref{necessary}) is also a sufficient condition:
\begin{proposition}
 If (\ref{necessary}) holds, then for any $u,\ v \in \wpsh\cap L^{\infty}(X)$ we have
$$\inpp\wpsn\leq\inpp\wphn.$$
\end{proposition}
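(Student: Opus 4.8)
The plan is to deduce the comparison principle directly from the mass equality (\ref{necessary}) together with the locality of the Monge-Amp\`ere operator, rather than through integration by parts; indeed the naive integration-by-parts computation of $\inpp(\wpsn-\wphn)$ would, in the Hermitian case, leave error terms carrying a factor $d\om$ which do not vanish, so (\ref{necessary}) must be used in a more global way. The idea is to glue $u$ and $v$ by a maximum and compare total masses.

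For $\epsilon>0$ set $h_\epsilon:=\ma(u,v-\epsilon)$, which again lies in $\wpsh\cap L^\infty(X)$. The key input is the plurifine locality of the operator (available through the psh-like property and the Bedford--Taylor results cited in the preliminaries): on the set $\{u<v-\epsilon\}$ one has $h_\epsilon=v-\epsilon$, whence $\om_{h_\epsilon}^n=\wpsn$ there, while on $\{u>v-\epsilon\}$ one has $h_\epsilon=u$, whence $\om_{h_\epsilon}^n=\wphn$. Splitting $X$ into the three sets $\{u<v-\epsilon\}$, $\{u=v-\epsilon\}$ and $\{u>v-\epsilon\}$ and doing the same decomposition for $\wphn$, I would then invoke (\ref{necessary}) twice in the form $\int_X\om_{h_\epsilon}^n=\int_X\om^n=\int_X\wphn$. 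Cancelling the common contribution $\int_{\{u>v-\epsilon\}}\wphn$ and discarding the nonnegative mass of $\om_{h_\epsilon}^n$ on the contact set yields
$$\int_{\{u<v-\epsilon\}}\wpsn\ \le\ \int_{\{u<v-\epsilon\}}\wphn+\int_{\{u=v-\epsilon\}}\wphn .$$

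It remains to remove the error term and pass to the limit. The level sets $\{u=v-\epsilon\}$ are pairwise disjoint for distinct $\epsilon$, so the finite measure $\wphn$ charges at most countably many of them; choosing a sequence $\epsilon_j\downarrow 0$ along which $\int_{\{u=v-\epsilon_j\}}\wphn=0$ and using $\{u<v-\epsilon_j\}\subset\{u<v\}$ gives $\int_{\{u<v-\epsilon_j\}}\wpsn\le\inpp\wphn$. Since $\{u<v-\epsilon_j\}$ increases to $\{u<v\}$ and $\wpsn$ is a positive measure, letting $j\to\infty$ and using continuity from below produces the desired inequality $\inpp\wpsn\le\inpp\wphn$.

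The main obstacle, and the reason for the two technical devices above, is the contact set $\{u=v-\epsilon\}$, equivalently the fact that $\{u<v\}$ need not be open for merely bounded upper semicontinuous $u,v$: the gluing step is legitimate only because the locality statement is the plurifine one, and the error term can be annihilated only because a finite measure cannot charge uncountably many disjoint level sets.
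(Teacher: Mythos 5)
Your proof is correct and follows essentially the same route as the paper's: both glue $u$ and (a translate of) $v$ by a maximum, use the plurifine locality of the Monge--Amp\`ere operator to identify the glued measure on $\lbrace u<v-\epsilon\rbrace$ and $\lbrace u>v-\epsilon\rbrace$, apply the total mass identity (\ref{necessary}) to the glued function and to $u$, cancel, and let $\epsilon\searrow 0$ by monotone convergence. The only cosmetic difference is the treatment of the contact set: the paper simply absorbs it via $\int_{\lbrace u<v-\epsilon\rbrace}\wphn+\int_{\lbrace u=v-\epsilon\rbrace}\wphn=\int_{\lbrace u\le v-\epsilon\rbrace}\wphn\le\inpp\wphn$ (since $\epsilon>0$ forces $\lbrace u\le v-\epsilon\rbrace\subset\lbrace u<v\rbrace$), which renders your countability argument for the level sets unnecessary, though it is harmless.
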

\begin{proof}
 It follows from the locality of the Monge-Amp\`ere operator (which is independent of the underlying metric) \cite{BT1} (see also \cite{GZ2}, \cite{EGZ}) that
$$(\om+\dc\max{(u, v)})^n|_{\lbr u> v\rbr}=(\om+\dc u)^n|_{\lbr u> v\rbr}.$$
Repeating the argument from \cite{GZ2} we obtain for any $\epsilon>0$
\begin{align*}
&\int_{\lbr u-\epsilon< v\rbr}\wpsn=\int_{\lbr u-\epsilon< v\rbr}(\om+\dc\max{(u, v)})^n\\
&=\int_X(\om+\dc\max{(u, v)})^n-\int_{\lbr u-\epsilon\geq v\rbr}(\om+\dc\max{(u, v)})^n\\
&\leq\int_X\wphn-\int_{\lbr u-\epsilon>v\rbr}\wphn=\int_{\lbr u-\epsilon\leq v\rbr}\wphn,
\end{align*}
where we have used condition (\ref{necessary}) and the positivity of the measure in passing from the second line to the last one.

Letting $\epsilon\searrow0$ and using monotone convergence one obtains the claimed result.
\end{proof}

The condition used by Guan and Li in their papers \cite{GL} and \cite{GL2}, namely $\dc\om =0,\ d^c \om \we d\om =0,$ implies  (\ref{necessary}), so comparison principle is true in this setting. In fact  it is enough to have only inequalities in this condition as the next proposition shows.
\begin{proposition}
Assume  $$
dd^c \om \geq 0, \ \ d^c\om\we d\om \geq 0.
$$
Then the comparison principle from the preceding proposition holds.
\end{proposition}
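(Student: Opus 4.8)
The plan is to run the scheme of the preceding proposition, but to replace the mass equality $\int_X\omega_u^n=\int_X\omega^n$ supplied by (\ref{necessary}) — which is no longer at our disposal under mere inequalities — by one‑sided estimates for the relevant integrals obtained through integration by parts, the error being steered by the two positivity assumptions. Concretely, for bounded $\omega$-psh $u,v$ put $w=v-u$ and $\Omega=\{u<v\}$, and begin from the telescoping identity
\begin{equation*}
\omega_v^n-\omega_u^n=\sum_{k=0}^{n-1}dd^c w\wedge T_k,\qquad T_k:=\omega_v^{\,k}\wedge\omega_u^{\,n-1-k},
\end{equation*}
so that $\int_\Omega(\omega_v^n-\omega_u^n)=\sum_k\int_\Omega dd^c w\wedge T_k$, where each $T_k$ is a positive current of the form required by Lemma \ref{bedtay}.

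First I would apply Lemma \ref{bedtay} to the pair $(u,v)$ and change signs, which gives, for every summand,
\begin{equation*}
\int_\Omega dd^c w\wedge T_k\le\int_\Omega d^c w\wedge dT_k .
\end{equation*}
Since $d\omega_u=d\omega_v=d\omega$, one has $dT_k=R_k\wedge d\omega$ with $R_k$ again a positive combination of wedge powers of $\omega_u,\omega_v$; thus the sole obstruction to the Kähler argument (where $d\omega=0$ annihilates the right‑hand side) is concentrated in these error integrals. The second step is a further integration by parts on $\Omega$, legitimate because $w=0$ on $\partial\Omega$, which moves $d^c$ off $w$ and, using $d^c d\omega=-dd^c\omega$, rewrites $\int_\Omega d^c w\wedge R_k\wedge d\omega$ as a combination of
\begin{equation*}
\int_\Omega w\,S_k\wedge d^c\omega\wedge d\omega\quad\text{and}\quad\int_\Omega w\,R_k\wedge dd^c\omega,
\end{equation*}
with $S_k$ a positive current and $w\ge 0$ on $\Omega$. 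This is exactly where both hypotheses are meant to act: $d^c\omega\wedge d\omega\ge 0$ governs the first family and $dd^c\omega\ge 0$ the second.

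The hard part is precisely the sign bookkeeping of these error integrals: a careless arrangement of the two integrations by parts leaves the $dd^c\omega$-term pointing the wrong way, so the manipulations over the sublevel sets must be organized so that the favorable boundary contributions guaranteed by Lemma \ref{bedtay} are exploited at every stage and each positivity assumption is matched to the term it actually dominates, yielding nonpositivity of the full error. Once $\int_\Omega(\omega_v^n-\omega_u^n)\le 0$ is established, I would remove the contact set $\{u=v\}$ by the standard device of rerunning the estimate with $v$ replaced by $v-\varepsilon$ and letting $\varepsilon\searrow 0$, as in the preceding proposition, to recover $\int_{\{u<v\}}\omega_v^n\le\int_{\{u<v\}}\omega_u^n$. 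I expect the two main technical hurdles to be this sign control and the regularization needed to justify the integrations by parts for merely bounded potentials — approximating $u,v$ by decreasing smooth $\omega$-psh functions and using the convergence of Monge–Amp\`ere operators recalled in the preliminaries.
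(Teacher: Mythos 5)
Your plan is, in its architecture, the paper's own proof: the same telescoping current $T=\sum_k\omega_v^{k}\wedge\omega_u^{n-1-k}$, the same use of the Bedford--Taylor boundary inequality (Lemma \ref{bedtay}) for the first integration by parts, the same observation that $dT$ is $d\omega$ wedged with a positive current (iterated, $dT_{j}=d\omega\wedge T_{j+1}$), and a second integration by parts whose boundary term vanishes because $u-v=0$ on $\partial\{u<v\}$ and whose two interior error terms are to be matched with $dd^c\omega\geq 0$ and $d^c\omega\wedge d\omega\geq 0$. The smoothing and contact-set issues you defer to approximation are handled the same way in the paper.

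The gap is that the step you yourself single out as ``the hard part'' --- verifying that after the two integrations by parts both error terms carry the favorable sign --- is the entire content of the proposition, and you do not carry it out. Two concrete points. First, the identity that makes the second integration by parts possible is not $d^cd\omega=-dd^c\omega$: one cannot apply Stokes directly to an integral of $d^c(\cdot)$ over a domain. What is needed is the bidegree identity $d(u-v)\wedge d^c\omega\wedge S=-d^c(u-v)\wedge d\omega\wedge S$ for $S$ of bidegree $(n-2,n-2)$ (the second Fact in the paper's proof, obtained by comparing $(n,n)$-components); this converts $\int_{\{u<v\}}d^c(u-v)\wedge d\omega\wedge T_1$ into $-\int_{\{u<v\}}d(u-v)\wedge d^c\omega\wedge T_1$, to which Stokes applied to $d[(u-v)\,d^c\omega\wedge T_1]$ yields exactly the two terms $\int_{\{u<v\}}(u-v)\,dd^c\omega\wedge T_1$ and $\int_{\{u<v\}}(u-v)\,d^c\omega\wedge d\omega\wedge T_2$ up to sign, with $u-v\leq 0$ on the domain of integration. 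Second, your remark that a careless arrangement leaves the $dd^c\omega$-term pointing the wrong way is precisely the live issue: when one writes out the Leibniz rule the two error terms do not automatically both acquire the sign that the hypotheses control, and the proof consists exactly in exhibiting the arrangement in which they do. Until you display that computation term by term and check each sign against $u-v\leq 0$, $T_1,T_2\geq 0$ and the two positivity hypotheses, you have a plausible plan rather than a proof; announcing that the manipulations ``must be organized so that each positivity assumption is matched to the term it dominates'' states the goal without supplying the argument.
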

\begin{proof}
Let $u,v$ be smooth $\om$-psh. Then the general case will follow by the psh-like property and the argument for psh functions (see e.g. \cite{koj05}).
Set $U=\{ u<v\}$, and define $T\geq 0$ by
$$
\om _u ^n -\om _v ^n = (\om _u -\om _v )\we T .
$$
\begin{fact} For some positive currents $T_1 , T_2 , ... T_{n-1}$ we have
$$
dT=d\omega\we T_1, \ \ dT_1 =d\om \we T_2, ... , \ \ dT_{n-2}=d\om\we T_{n-1}.
$$
\end{fact}
\begin{proof} It is enough to observe that
$$
d[\om _u ^p \we \om _v ^q ]= pd\om \we \om _u ^{p-1}\we \om _v^q
+qd\om \we \om _u ^{p}\we \om _v^{q-1}.
$$
\end{proof}
By comparing the forms of the same bidegree one easily sees also the following identity.
\begin{fact} $d(u-v)\we d^c \om \we S = -d^c (u-v)\we d\om$ for $S$ positive.
\end{fact}

Let us denote by $A$ the boundary term $\int _{\dd U } d^c (u-v)\we T \geq 0$ (non negativity  of $A$ follows from the proof of the comparison principle in \cite{BT1}). We thus have,
applying the above facts,
$$\aligned
&\int _U \om _u ^n -\om _v ^n = \int _{\dd U } d^c (u-v)\we T
- \int _U  d^c (u-v)\we d\om \we T_1 \\ &= A +\int _U  d^c (u-v)\we d\om \we T_1 = A +\int _U d[(u-v)\we d^c\om \we T_1 ] \\
& -\int _U (u-v) dd^c \om \we T_1 -\int_U (u-v)d^c \om \we dT_1 \\ &\text{ (by Stokes and   } dd^c\om \geq 0) \\
& \geq A- \int _U (u-v)d^c\om \we d\om \we T_2 .
\endaligned
$$
The last integral is nonpositive if $d^c\om \we d\om \we T_2\geq 0$
which follows from our hypothesis.
\end{proof}

 The {\it balanced} metrics, studied extensively (see \cite{TW1} and references therein for more details), are defined by $d(\om^{n-1})=0$. They need not satisfy the condition (\ref{necessary}), except in the case $n=2$. Instead, the reasoning from the first proposition of this section gives us the following comparison principle for the Laplacian in this case:

{\it Let $\om$ be a balanced metric and let $\phi,\ \psi \in \wpsh\cap L^{\infty}(X)$. Then
$$\inpp\wps\we\om^{n-1}\leq\inpp\wph\we\om^{n-1}.$$}

Now we present a weaker form of comparison principle with "error terms" which will be useful in obtaining a priori estimates:
\begin{theorem}\label{main1}
 Let $\om$ be a Hermitian metric on a complex compact manifold $X$ and let $u, v \in \wpsh\cap L^{\infty}(X)$. Then there exists a polynomial $P_n$ of degree $n$, with zeroth coefficient equal to $0$, depending only on the dimension,  such that
$$\inpp\wpsn\leq\inpp\wphn+P_n (B M )\sum_{k=0}^n\inpp\wph^k\we\om^{n-k},$$
where $B$ is defined by (\ref{B})) and $M= sup_{\lbr u< v\rbr}(v-u)$.
\end{theorem}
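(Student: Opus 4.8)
The plan is to estimate $\int_U\omega_v^n-\int_U\omega_u^n$, where $U=\{u<v\}$, by repeatedly integrating by parts and controlling every resulting curvature term by the constant $B$ of (\ref{B}). First I would reduce to smooth $u,v$ exactly as in the preceding propositions, using the psh-like property and decreasing approximation; then $\partial U\subset\{u=v\}$, the function $w:=u-v$ satisfies $-M\le w\le 0$ on $U$ with $w=0$ on $\partial U$, and all the Monge--Amp\`ere type integrals below pass to the limit by the Bedford--Taylor convergence recalled in the preliminaries. Throughout I would use that $d\omega_u=d\omega=d\omega_v$, so that $d$ applied to any wedge monomial in $\omega_u,\omega_v,\omega$ produces $d\omega$ times a positive current (the analogue of the first Fact above).

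The technical core is a single one--step estimate: for $S$ a positive current of the form in Lemma~\ref{bedtay} (a wedge product of $\omega_u,\omega_v,\omega=\omega_0$), one has
$$\int_U dd^c(v-u)\wedge S\le \frac{MB}{n}\int_U\omega^2\wedge S_1+\frac{MB}{n^2}\int_U\omega^3\wedge S_2,$$
where $S_1,S_2\ge 0$ are defined by $dS=d\omega\wedge S_1$ and $dS_1=d\omega\wedge S_2$. To prove it I would apply Lemma~\ref{bedtay} to discard the favorable first boundary term, giving $\int_U dd^c(v-u)\wedge S\le\int_U d^c(v-u)\wedge dS=\int_U d^c(v-u)\wedge d\omega\wedge S_1$; then I would invoke the bidegree identity $d^c(v-u)\wedge d\omega\wedge S_1=-\,d(v-u)\wedge d^c\omega\wedge S_1$ (the analogue of the second Fact above) and integrate by parts once more. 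Since $w=0$ on $\partial U$, this Stokes step leaves no boundary term and yields $\int_U(v-u)\,dd^c\omega\wedge S_1-\int_U(v-u)\,d^c\omega\wedge d\omega\wedge S_2$; finally $|v-u|\le M$ together with the two inequalities of (\ref{B}), applied to the relevant $(2,2)$- and $(3,3)$-parts wedged against the positive currents $S_1,S_2$, produces the bound.

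With this estimate in hand I would run the main computation by telescoping and descent on the total $\{\omega_u,\omega_v\}$-degree. Writing
$$\int_U\omega_v^n=\int_U\omega_u^n+\sum_{p=0}^{n-1}\int_U dd^c(v-u)\wedge\omega_u^p\wedge\omega_v^{\,n-1-p},$$
each summand is bounded by the one--step estimate by $MB$ times integrals $\int_U\omega_u^a\wedge\omega_v^b\wedge\omega^{n-a-b}$ of strictly smaller degree $a+b$. Every such mixed integral is then treated by the same device, replacing one $\omega_v$ by $\omega_u$ at the cost of a further $dd^c(v-u)\wedge S$ term, until only the pure integrals $\int_U\omega_u^k\wedge\omega^{n-k}$ remain. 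As each application carries a factor $MB$ and lowers the degree (starting from $n$), the descent terminates in at most $n$ rounds and the accumulated coefficient is a polynomial $P_n(BM)$ of degree $n$ with $P_n(0)=0$.

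The step I expect to be most delicate is the bookkeeping in this descent: one must organize it so that the mixed terms are eliminated without reintroducing monomials of the same or higher degree, so that the error genuinely closes up as a polynomial in $BM$. A clean way is a downward induction on the degree $m$, proving that every $\int_U\omega_u^a\wedge\omega_v^b\wedge\omega^{n-m}$ with $a+b=m$ is dominated by $\int_U\omega_u^m\wedge\omega^{n-m}$ plus a degree-$m$ polynomial in $BM$ times $\sum_{k\le m}\int_U\omega_u^k\wedge\omega^{n-k}$; the claimed inequality is the case $m=n$. A secondary, more routine point is the passage from smooth to general bounded $\omega$-psh functions, where one checks that $U$, the constant $M$, and all the currents behave well under decreasing approximation, which follows from the local Bedford--Taylor theory and the psh-like property.
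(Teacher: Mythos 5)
Your proposal is correct and follows essentially the same route as the paper's proof: Lemma \ref{bedtay} to discard the favourable boundary term, the bidegree identity turning $d^c(v-u)\wedge d\omega$ into $-d(v-u)\wedge d^c\omega$ followed by Stokes, the bound of the resulting $dd^c\omega$ and $d\omega\wedge d^c\omega$ terms by $BM$ via (\ref{B}), and an iteration trading $\omega_v$ for $\omega_u$ that accumulates a degree-$n$ polynomial in $BM$ with vanishing constant term. Your telescoping and downward induction on the $\{\omega_u,\omega_v\}$-degree merely make explicit what the paper compresses into ``repeating the above procedure,'' and the preliminary reduction to smooth $u,v$ is unnecessary (indeed it is the step the paper's argument deliberately avoids, since Lemma \ref{bedtay} is already stated for bounded $\omega$-psh functions).
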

\begin{proof}
 Note that
\begin{align*}
 &\inpp\wpsn=\inpp\om\we\wps^{n-1}+\inpp\dc v\we\wps^{n-1}\\
&\leq \inpp\om\we\wps^{n-1}+\inpp\dc u \we\wps^{n-1}+\inpp d^c(v-u)\we d(\wps^{n-1}),
\end{align*}
where we have used Lemma \ref{bedtay}.
Again by (\ref{B}) we have
$$\dc(\wps^{n-1}) \leq B[\om ^2\we \wps^{n-2} +\om ^3\we \wps^{n-3}] .
$$
 Thus by Stokes theorem
\begin{align*}
 &\inpp\wpsn\leq\inpp\wph\we\wps^{n-1}-\inpp d(v-u)\we d^c(\wps^{n-1})\\
&\leq \inpp\wph\we\wps^{n-1}+\inpp (v-u)\we \dc(\wps^{n-1})\\
&\leq \inpp\wph\we\wps^{n-1}+sup_{\lbr u< v\rbr}(v-u)B\inpp(\om^2\we\wps^{n-2}+\om^3\we\wps^{n-3}).
\end{align*}
Repeating the above procedure of replacing $\wps$ by $\om$\ and $\wph$ in the end one obtains the statement.
\end{proof}

 In complex dimension $2$  the next proposition yields better inequality.
Recall first a classical notion in Hermitian geometry, the Gauduchon  metric (see \cite{GA}).
 Let $\phi$\ be the unique function such that $inf_X \phi =0$\ and $\eu\om$\ is a Gauduchon metric i.e.
 \begin{equation}\label{gauduchon}
\dc (e^{(n-1)\phi }\om^{n-1})=0.
\end{equation}
\begin{proposition}\label{2compprin}
Let $\om$ be a Hermitian metric on a complex compact manifold $X$ of dimension 2 and let $u,\ v \in \wpsh\cap L^{\infty}(X)$. Let also $\phi$ be defined by the Gauduchon condition with respect to $\om$. Then the following comparison principle holds
$$\inpp2(\eu-1)\wps\we\om+\inpp\wps^2\leq\inpp2(\eu-1)\wph\we\om+\inpp\wph^2.$$
\end{proposition}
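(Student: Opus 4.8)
The plan is to reproduce the max--function argument of the first proposition of this section, after replacing the Monge--Amp\`ere measure by a modified one adapted to the Gauduchon condition. For a bounded $\omega$-psh function $w$ put $\omega_w=\omega+dd^cw$ and
$$\Theta_w:=\omega_w^2+2(e^{\phi}-1)\,\omega_w\wedge\omega .$$
Since the Gauduchon normalization gives $\inf_X\phi=0$, hence $\phi\geq0$ and $e^{\phi}-1\geq0$, and since in dimension two both $\omega_w^2$ and $\omega_w\wedge\omega$ are top--degree positive forms, $\Theta_w$ is a positive measure. The asserted inequality is precisely $\int_{\{u<v\}}\Theta_v\leq\int_{\{u<v\}}\Theta_u$.

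The crucial point, playing here the role that condition (\ref{necessary}) plays in the first proposition, is that the total mass $\int_X\Theta_w$ is independent of $w$. To see this I would take two bounded $\omega$-psh functions $w_1,w_2$ and compute, using $\omega_{w_1}-\omega_{w_2}=dd^c(w_1-w_2)$ and $\omega_{w_1}+\omega_{w_2}=2\omega+dd^c(w_1+w_2)$,
$$\int_X(\Theta_{w_1}-\Theta_{w_2})=\int_X dd^c(w_1-w_2)\wedge\big[\omega_{w_1}+\omega_{w_2}+2(e^{\phi}-1)\omega\big]=\int_X dd^c(w_1-w_2)\wedge\big[2e^{\phi}\omega+dd^c(w_1+w_2)\big].$$
This splits into $2\int_X dd^c(w_1-w_2)\wedge e^{\phi}\omega$ and $\int_X dd^c(w_1-w_2)\wedge dd^c(w_1+w_2)$. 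Integrating by parts twice on the closed manifold $X$ (legitimate for bounded potentials by the Bedford--Taylor theory recalled in the preliminaries), the first equals $2\int_X(w_1-w_2)\,dd^c(e^{\phi}\omega)$ and vanishes by the Gauduchon condition (\ref{gauduchon}) with $n=2$, while the second equals $0$ because $dd^c a\wedge dd^c b=d(d^c a\wedge dd^c b)$ is exact. Hence $\int_X\Theta_{w_1}=\int_X\Theta_{w_2}$.

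Granting positivity and mass invariance, I would run the first proposition's argument verbatim. Fix $\epsilon>0$ and set $m:=\max(u,v-\epsilon)$, again a bounded $\omega$-psh function. On the open set $\{u<v-\epsilon\}$ one has $m=v-\epsilon$, hence $\Theta_m=\Theta_v$ there by locality of the operators (psh-like property and Bedford--Taylor locality), while on $\{u>v-\epsilon\}$ one has $m=u$ and $\Theta_m=\Theta_u$. Therefore, using mass invariance and the positivity of $\Theta_m$,
$$\int_{\{u<v-\epsilon\}}\Theta_v=\int_X\Theta_m-\int_{\{u\geq v-\epsilon\}}\Theta_m\leq\int_X\Theta_u-\int_{\{u>v-\epsilon\}}\Theta_u=\int_{\{u\leq v-\epsilon\}}\Theta_u\leq\int_{\{u<v\}}\Theta_u .$$
Letting $\epsilon\searrow0$ and applying monotone convergence on the increasing exhaustion $\{u<v-\epsilon\}\nearrow\{u<v\}$ yields $\int_{\{u<v\}}\Theta_v\leq\int_{\{u<v\}}\Theta_u$, which is the claimed comparison principle.

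I expect the heart of the matter to be the mass--invariance identity: choosing the coefficient $2(e^{\phi}-1)$ so that the bracket collapses exactly to $2e^{\phi}\omega+dd^c(w_1+w_2)$, and then justifying the two integrations by parts for merely bounded $\omega$-psh potentials (in particular for the non--smooth function $m$), which is where the convergence theorems recalled in the preliminaries and the Gauduchon condition are really used. Once $\Theta_w$ is recognized as a positive measure of $w$-independent total mass, the remainder is a transcription of the first proposition.
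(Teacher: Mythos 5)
Your argument is correct, and it reorganizes the paper's proof rather than reproducing it. The paper factorizes the left-hand side as $\int_{\{u<v\}}(e^{\phi}\omega+dd^cv)\wedge\omega_v$ plus a lower-order term and then applies, twice in succession, a Laplacian-type comparison principle with respect to the two positive $dd^c$-closed $(1,1)$-currents $e^{\phi}\omega+dd^cv$ and $e^{\phi}\omega+dd^cu$ (each application being an instance of the first proposition's mass-invariance argument, now for the mixed operator $T\wedge\omega_w$), with an algebraic rearrangement in between. You instead package both sides into the single operator $\Theta_w=\omega_w^2+2(e^{\phi}-1)\,\omega_w\wedge\omega$, verify directly that it is a positive measure of $w$-independent total mass, and run the max-function argument once. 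The two proofs rest on exactly the same two facts, namely $dd^c(e^{\phi}\omega)=0$ and $e^{\phi}\geq 1$ coming from the normalization $\inf_X\phi=0$; indeed $\Theta_w=(e^{\phi}\omega+dd^cw)\wedge\omega_w+(e^{\phi}-1)\,\omega\wedge\omega_w$, so your mass-invariance identity is precisely the paper's two closedness assertions rolled into one. What your version buys is self-containedness (the paper leaves the intermediate mixed comparison principles as implicit corollaries of its first proposition) and a cleaner one-pass structure; what the paper's factorized form buys is that the positivity and $dd^c$-closedness of the reference current are visible at a glance. One cosmetic caveat: the sets $\{u<v-\epsilon\}$ and $\{u>v-\epsilon\}$ are only plurifinely open, not open, but the Bedford--Taylor locality you invoke is exactly the plurifine locality already used in the paper's first proposition, so nothing is lost.
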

\begin{proof}
 Note that
\begin{align*}
 &\inpp(\eu-1)\wps\we\om+\inpp\wps^2=\inpp(\eu\om+\dc v)\we\wps\\
&\leq \inpp(\eu\om+\dc v)\we\wph
\end{align*}
(because $\eu\om+\dc\psi$\ is also $\dc$- closed).

The last term equals
\begin{align*}
&\inpp(\eu-1)\om\we\wph+\inpp\wps\we(\eu\om+\dc u)-\inpp(e^u-1)\om\we\wps\\
&\leq\inpp(\eu-1)\om\we\wph+\inpp\wph\we(\eu\om+\dc u)-\inpp(e^u-1)\om\we\wps\\
&=\inpp2(\eu-1)\om\we\wph+\inpp\wph^2-\inpp(e^u-1)\om\we\wps.
\end{align*}
\end{proof}

 We end this section  proving the comparison principle for the Laplace operator.
\begin{proposition}Let $\om$ be a Hermitian metric and let $u,\ v \in \wpsh\cap L^{\infty}(X)$. Then there exists a constant $C\geq1$\ dependent only on $\om$\ such that
$$\inpp\wps\we\om^{n-1}\leq C\inpp\wph\we\om^{n-1}.$$
\end{proposition}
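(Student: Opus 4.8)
The plan is to trade the general Hermitian metric for a conformally equivalent Gauduchon metric, for which a clean Laplacian comparison is available, at the price of a bounded multiplicative constant. Let $\phi$ be the Gauduchon potential normalized by $\inf_X\phi=0$, so that by (\ref{gauduchon}) the smooth strictly positive $(n-1,n-1)$-form $\Theta:=e^{(n-1)\phi}\om^{n-1}$ is $\dc$-closed, $\dc\Theta=0$. Since $0=\inf_X\phi\le\phi\le\sup_X\phi$ and $\phi$ is smooth, one has the pointwise inequalities of positive forms $\om^{n-1}\le\Theta\le e^{(n-1)\sup_X\phi}\om^{n-1}$. The wedge of the positive currents $\wps,\ \wph$ with a smooth positive form is a positive measure, so wedging these inequalities gives
$$\inpp\wps\we\om^{n-1}\le\inpp\wps\we\Theta,\qquad\inpp\wph\we\Theta\le e^{(n-1)\sup_X\phi}\inpp\wph\we\om^{n-1}.$$
Hence it suffices to establish the Laplacian comparison for the $\dc$-closed form $\Theta$, namely $\inpp\wps\we\Theta\le\inpp\wph\we\Theta$; chaining this with the two displayed inequalities then yields the claim with $C:=e^{(n-1)\sup_X\phi}\ge1$, which depends only on $\om$ through its Gauduchon factor.

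To prove the $\Theta$-comparison I would first reduce to smooth $u,\ v$ exactly as in the balanced case above, using the psh-like property and the Bedford--Taylor convergence theorems recalled in Section 2. Put $w=v-u$ and $U=\lbr u<v\rbr$, so that the difference of the two sides equals $\int_U\dc w\we\Theta$. Stokes' theorem gives $\int_U\dc w\we\Theta=\int_{\dd U}d^c w\we\Theta+\int_U d^c w\we d\Theta$. The decisive point is that the bulk correction vanishes: comparing bidegrees (the purely holomorphic and antiholomorphic terms $\partial w\we\partial\Theta$ and $\db w\we\db\Theta$ drop for degree reasons) one obtains the pointwise identity $d^c w\we d\Theta=-dw\we d^c\Theta$, and therefore, using $w|_{\dd U}=0$ together with $\dc\Theta=0$,
$$\int_U d^c w\we d\Theta=-\int_U dw\we d^c\Theta=-\int_U d(w\,d^c\Theta)=-\int_{\dd U}w\,d^c\Theta=0.$$
Thus $\int_U\dc w\we\Theta$ collapses to the single boundary integral $\int_{\dd U}d^c w\we\Theta$.

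It remains to control the sign of this boundary term, which is the step I expect to be the main obstacle. Since $w=0$ on $\dd U$ and $w>0$ inside $U$, the normal behaviour of $d^c w$ together with the positivity of $\Theta$ forces $\int_{\dd U}d^c(v-u)\we\Theta\le0$; this is precisely the sign extracted in the proof of the Bedford--Taylor comparison principle (\cite{BT1}) that underlies Lemma \ref{bedtay} and the balanced case above, and it carries over because $\Theta$ is a smooth positive form. Granting it, $\inpp\wps\we\Theta-\inpp\wph\we\Theta=\int_U\dc w\we\Theta\le0$, so that
$$\inpp\wps\we\om^{n-1}\le\inpp\wps\we\Theta\le\inpp\wph\we\Theta\le e^{(n-1)\sup_X\phi}\inpp\wph\we\om^{n-1}.$$
The genuinely delicate part is, as always in this circle of ideas, making the approximation of $u,\ v$ compatible with the integrations by parts on the non-smooth sublevel set $U$; this is handled, as in the K\"ahler and balanced cases, by approximating from above and invoking the continuity of the mixed Monge--Amp\`ere masses.
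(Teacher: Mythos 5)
Your argument is correct and its skeleton is the same as the paper's: pass to the Gauduchon form $\Theta=e^{(n-1)\phi}\om^{n-1}$, prove the clean Laplacian comparison for that $\dc$-closed form, and pay the factor $C=e^{(n-1)\sup_X\phi}$ when returning to $\om^{n-1}$. The only divergence is in how the middle comparison is justified. You run a direct Stokes computation on $U=\lbr u<v\rbr$, kill the bulk term via the bidegree identity $d^c w\we d\Theta=-dw\we d^c\Theta$ together with $\dc\Theta=0$, and then need the sign of the boundary integral $\int_{\dd U}d^c(v-u)\we\Theta$; that sign is exactly the content of Lemma \ref{bedtay} (whose proof applies just as well to a smooth positive $(n-1,n-1)$-form as to currents of the form $\om_{u_1}\we\cdots\we\om_{u_{n-1}}$), so the step you flag as the main obstacle is indeed available --- but this route does drag you through the regularization and boundary issues you mention at the end. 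The paper instead observes that $\int_X\wps\we e^{(n-1)\phi}\om^{n-1}$ is independent of $v$ (Stokes against the $\dc$-closed form) and then reruns the proof of the first proposition of Section 3, which uses only the locality of the operator and the $\max(u,v)$ construction; that mechanism works directly for bounded $\om$-psh functions, with no boundary terms and no smoothing. Both arguments are valid; yours is self-contained at the level of integration by parts, while the paper's is shorter because it recycles the ``mass conservation implies comparison'' principle already established.
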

\begin{proof}

Note that $\inpp\wps\we\om^{n-1}\leq\inpp\wps\we e^{(n-1)\phi}\om^{n-1},$ with $v$\ defined by (\ref{gauduchon}). Since $\int_X\wps\we e^{(n-1)\phi }\om^{n-1}$\ is independent of $v$ (due to the Gauduchon metric assumption) we can repeat the proof of the first proposition in this section to conclude that
$$\inpp\wps\we\om^{n-1}\leq\inpp\wph\we e^{(n-1)\phi}\om^{n-1}\leq e^{(n-1)sup_X \phi}\inpp\wph\we \om^{n-1}.$$
\end{proof}

While the constant $C$ is unsatisfactory for many applications, note that the argument for obtaining $L^{\infty}$\ estimates for equations with $L^p$- right hand side from \cite{koj03} is virtually unaffected by that. So, as a by-product of the above comparison principle we obtain another proof of the $L^{\infty}$\ estimates for the Herimitan Laplacian equation which has the advantage that is independent of any Green type argument and hence independent from cumbersome curvature estimates (strictly speaking, those are hidden in the Gauduchon function).

\section{$L^{\infty}$ estimates}\label{apriori}
In this section we shall prove uniform a priori estimates for the Monge-Amp\`ere equations with $L^p$ right hand side ($p>1$) essentially repeating the proof from \cite{koj03}. Note that whenever the comparison principle holds we have all the needed ingredients and the proof is the same as in \cite{koj03}. The crunch is however that one can prove those a priori estimates using only the weaker form of the comparison principle (Theorem \ref{main1}).

We begin with an auxiliary proposition:
\begin{proposition}\label{bettercapest}
 Let $A>1, t$ and $\epsilon$ be positive constants satisfying $At+\epsilon<\frac1{2B}$, with $B$\ defined by (\ref{B}). Let $u,v <0$ be  bounded $\om$-psh functions such that $1-A<v<0,$ $ sup_X (u-v)=0,$ and $\ inf_X (u-v)=-S$. Then
$$t^n\caw(\lbr u<v-S+\epsilon\rbr)\leq C_n \sum_{k=0}^n\int_{\lbr u<(1-t)v-S+\epsilon+t\rbr}\wph^k\we\om^{n-k},$$
with a constant $C_n$\ dependent only on $n$.
\end{proposition}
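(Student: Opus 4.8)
The plan is to read the estimate off directly from the definition of $\caw$, and to substitute for the (here unavailable) exact comparison principle its error-term version, Theorem \ref{main1}. Fix a competitor $w\in\wpsh$ with $0\le w\le 1$ for the supremum defining $\caw(\{u<v-S+\epsilon\})$, and set
$$g:=(1-t)v+tw.$$
Since $\om+\dc g=(1-t)(\om+\dc v)+t(\om+\dc w)$ is a convex combination of positive currents, $g$ is bounded $\om$-psh; expanding the $n$-th power and keeping only the pure $w$-term gives $(\om+\dc g)^n\ge t^n(\om+\dc w)^n$. Hence
$$t^n\int_{\{u<v-S+\epsilon\}}(\om+\dc w)^n\le\int_{\{u<v-S+\epsilon\}}(\om+\dc g)^n,$$
and the whole matter is reduced to comparing the mass of $g-S+\epsilon$ against that of $u$.

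I would next record the two inclusions that structure the argument. From $w\ge 0>v$ and $w\le 1$ one gets $g-v=t(w-v)>0$ and $g\le(1-t)v+t$, so pointwise $v\le g\le(1-t)v+t$ and therefore
$$\{u<v-S+\epsilon\}\subseteq\{u<g-S+\epsilon\}\subseteq\{u<(1-t)v-S+\epsilon+t\}.$$
The first inclusion lets me enlarge the domain of $\int(\om+\dc g)^n$ to $\{u<g-S+\epsilon\}$ by positivity of the measure; the second will be used at the end to reach the set appearing in the statement.

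The crux is verifying the oscillation hypothesis of Theorem \ref{main1}, and this is exactly where the assumption $At+\epsilon<\frac1{2B}$ is spent. Because $\inf_X(u-v)=-S$ gives $u\ge v-S$ everywhere, on all of $X$ we have
$$(g-S+\epsilon)-u\le g-v+\epsilon=t(w-v)+\epsilon<tA+\epsilon<\frac1{2B},$$
using $w\le 1$ and $v>1-A$. Thus $M:=\sup_{\{u<g-S+\epsilon\}}\big((g-S+\epsilon)-u\big)<\frac1{2B}$, so $BM<\frac12$ and $P_n(BM)$ is bounded by a constant depending only on $n$ (here the vanishing of the zeroth coefficient of $P_n$ is convenient but a crude bound on $[0,\tfrac12]$ already suffices). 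Applying Theorem \ref{main1} to the pair $u$ and $g-S+\epsilon$ — noting $\om+\dc(g-S+\epsilon)=\om+\dc g$ — and absorbing the leading $k=n$ term into the sum yields
$$\int_{\{u<g-S+\epsilon\}}(\om+\dc g)^n\le C_n\sum_{k=0}^n\int_{\{u<g-S+\epsilon\}}\wph^k\we\om^{n-k}.$$

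It then remains to chain the displays and to enlarge each domain on the right from $\{u<g-S+\epsilon\}$ to $\{u<(1-t)v-S+\epsilon+t\}$ via the second inclusion, again using positivity of the measures $\wph^k\we\om^{n-k}$. Taking the supremum over all admissible $w$ converts the left-hand side into $t^n\caw(\{u<v-S+\epsilon\})$ and produces the asserted inequality. The only real obstacle is the oscillation bookkeeping of the preceding paragraph: the shift $-S+\epsilon$ together with the convex weight $t$ must be tuned so that $g-S+\epsilon-u$ stays below $1/(2B)$ uniformly on the sublevel set, which is precisely what renders the error polynomial $P_n(BM)$ of Theorem \ref{main1} harmless, i.e. boundable by a dimensional constant.
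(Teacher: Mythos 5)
Your proof is correct and follows essentially the same route as the paper's: fix a competitor $w$, use the chain of inclusions $\{u<v-S+\epsilon\}\subset\{u<(1-t)v+tw-S+\epsilon\}\subset\{u<(1-t)v-S+\epsilon+t\}$, verify via $u\ge v-S$, $w\le 1$, $v>1-A$ that the oscillation $M$ stays below $\frac1{2B}$, and invoke Theorem \ref{main1} with the error polynomial bounded by a dimensional constant. Your choice of $g=(1-t)v+tw$ (so that the measure being compared is $\om_g^n=((1-t)\om_v+t\om_w)^n\ge t^n\om_w^n$, matching exactly the set $\{u<g-S+\epsilon\}$ required by Theorem \ref{main1}) is in fact a cleaner bookkeeping than the paper's $\om_{tw}$, but the argument is the same.
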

\begin{proof}
  Choose any function $w\in\wpsh,\ 0\leq w\leq1$. Note that we have the set inclusions
$$\lbr u<v-S+\epsilon\rbr\subset\lbr u<(1-t)v-S+\epsilon+tw\rbr=:U\subset\lbr u<(1-t)v-S+\epsilon+t\rbr.$$
Then
\begin{equation*}
 t^n\int_{\lbr u<v-S+\epsilon\rbr}\om_w^n\leq \int_{\lbr u<v-S+\epsilon\rbr}\om_{tw}^n\leq\int_{\lbr u<(1-t)v-S+\epsilon+tw\rbr}\om_{tw}^n,
\end{equation*}
where we have used that $t\leq \frac12\leq1$. Since by our assumptions $M=sup_U((1-t)v+t-S+\epsilon-u)<\frac1{2B}$  we conclude from Theorem \ref{main1} the latter quantity can be estimated by $$\int_{U}\wphn+\frac{C_n}{2}
\sum_{k=0}^n\int_U\wph^k\we\om^{n-k}\leq C_n\sum_{k=0}^n
\int_{\lbr u<(1-t)v-S+\epsilon+t\rbr}\wph^k\we\om^{n-k} ,$$
and the result follows.
\end{proof}
Below we state a crucial lemma which   allows to control the mixed Monge-Amp\`ere measures appearing in the above proposition if $v=const.$

\begin{lemma}\label{main2}
 Let $u, t,\ \epsilon$\ be as above (take now $A=1$). Then for every $k\in 1,\cdots,n-1$ we have the estimate
$$\int_{\lbr u<-S+\epsilon+t\rbr}\wph^k\we\om^{n-k}\leq C\int_{\lbr u<-S+\epsilon+t\rbr}(\wphn+\om^n)$$
for some $C$ dependent only on $n$.
\end{lemma}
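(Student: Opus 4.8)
The plan is to bound each mixed term $\int \om_u^k\wedge\om^{n-k}$ on the sublevel set $\{u<-S+\epsilon+t\}$ by the full Monge--Amp\`ere mass plus a multiple of $\int\om^n$ over the same set. The natural mechanism is the one already displayed in the proof of the second Proposition of the preliminaries (finiteness of the mixed masses): integrate by parts to trade one factor of $\om_u$ for $\om$, picking up error terms governed by $\dc\om$ and $d\om\wedge d^c\om$, each of which is controlled by $\om^2$ and $\om^3$ via the constant $B$ of (\ref{B}). The key difference here is that the integration is over a \emph{sublevel set} rather than over all of $X$, so boundary contributions must be handled rather than discarded.

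First I would fix $k$ and write, abbreviating $w=u-(-S+\epsilon+t)$ so that the domain is $\{w<0\}$, the decomposition $\om_u^k\wedge\om^{n-k}=\om_u^{k-1}\wedge\om^{n-k+1}+\dc u\wedge\om_u^{k-1}\wedge\om^{n-k}$. The second integral I would treat by Lemma \ref{bedtay} together with Stokes, exactly as in Theorem \ref{main1}: moving the $\dc$ onto the remaining form produces terms involving $\dc\om$ and $d\om\wedge d^c\om$ wedged with $\om_u^{k-1}$ and lower powers, all bounded through (\ref{B}) by $B$ times integrals of the shape $\om_u^{j}\wedge\om^{n-j}$ with $j\le k-1$, and crucially the prefactor is the oscillation of $w$ on the sublevel set. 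Since $u$ satisfies $\operatorname{sup}_X u=0$ and $\inf_X u=-S$, on $\{w<0\}$ the function $-w$ is bounded by a controlled constant (here one uses the hypotheses $At+\epsilon<\tfrac1{2B}$ with $A=1$ that pin down the relevant oscillation), so these prefactors are under control. Iterating the reduction from $k$ down to $0$ collapses every mixed term into the pure term $\om_u^n=\wphn$ and the background $\om^n$, all integrated over the \emph{same} sublevel set, which is what the statement asserts.

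The main obstacle I expect is the boundary term $A=\int_{\dd\{w<0\}}d^c w\wedge T$. Unlike the global computation in the preliminaries, Stokes on the sublevel set leaves such a boundary integral at each step. The right move is to exploit its sign: as recorded in Lemma \ref{bedtay} and in the comparison-principle proofs from \cite{BT1}, the boundary term $\int d^c(u-\text{const})\wedge T$ over the set where $u$ drops below a level has a favorable sign, so it can be \emph{discarded} in the desired direction of the inequality rather than estimated. Concretely I would apply Lemma \ref{bedtay} with $v$ replaced by the constant $-S+\epsilon+t$, for which $\om_v=\om$, so that the lemma directly yields $\inpp \dc(u-v)\wedge T\ge \inpp d^c(u-v)\wedge dT$ and the unwanted boundary contribution is absorbed with the correct sign. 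Thus each integration by parts gives a clean one-sided inequality, and the constants accumulate to a single $C=C(n)$, since at every stage the extra factors are either powers of $B$ (now harmless because $B$ appears only through the assumption $At+\epsilon<\tfrac1{2B}$, keeping it multiplied by a quantity $<1$) or combinatorial constants depending on $n$ alone. The only real care needed is bookkeeping: tracking that every term produced really is of the form $\om_u^j\wedge\om^{n-j}$ over the prescribed sublevel set so that the induction closes.
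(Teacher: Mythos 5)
Your overall toolkit is the right one (trade factors between $\om_u$ and $\om$ by integration by parts, control the error via (\ref{B}) times the oscillation on the sublevel set, exploit the sign of the Bedford--Taylor boundary term), and your observation that $0<-w\le \epsilon+t<\tfrac1{2B}$ on the sublevel set is exactly where the hypothesis with $A=1$ enters. But the direction of your reduction is backwards, and that is a genuine gap, not bookkeeping. Write $v$ for the constant $-S+\epsilon+t$ and $a_k:=\int_{\{u<v\}}\om_u^k\we\om^{n-k}$. Your decomposition gives
$$a_k=a_{k-1}+\int_{\{u<v\}}\dc (u-v)\we\om_u^{k-1}\we\om^{n-k},$$
so to bound $a_k$ from above you need an \emph{upper} bound on $\int_{\{u<v\}}\dc(u-v)\we T$. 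Lemma \ref{bedtay} only supplies a \emph{lower} bound: the boundary contribution $\int_{\partial\{u<v\}}d^c(u-v)\we T$ is nonnegative and can be discarded only when it appears with the favourable sign, which happens precisely when one substitutes $\om=\om_u-\dc u$ and thereby \emph{increases} the power of $\om_u$, i.e. writes $a_k=a_{k+1}-\int_{\{u<v\}}\dc(u-v)\we\om_u^{k}\we\om^{n-k-1}$. The analogy you draw with the finiteness proposition in the preliminaries does not transfer, because there the integration is over all of $X$ and Stokes produces no boundary terms, so the reduction may run downward freely. Moreover your conclusion that iterating ``down to $0$'' collapses everything onto $\om_u^n$ and $\om^n$ is internally inconsistent: a downward iteration never reaches the top power $\om_u^n$, and a bound $a_k\le C a_0$ alone is false in general, since $a_1=a_0+\int_{\{u<v\}}\dc u\we\om^{n-1}$ can be arbitrarily large while $a_0$ is merely the volume of the sublevel set.

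The second point you underestimate is the closing step. Even with the correctly oriented integration by parts, the error term $\int_{\{u<v\}}(v-u)\,\dc(\om_u^k\we\om^{n-k-1})$ is bounded by $(\epsilon+t)B(a_k+a_{k-1}+a_{k-2})\le\tfrac12(a_k+a_{k-1}+a_{k-2})$, so the indices do not telescope: one obtains the coupled system $a_k\le 2a_{k+1}+a_{k-1}+a_{k-2}$, with contributions feeding both up and down the scale. The proof is finished by the separate elementary assertion that any nonnegative sequence satisfying this recursion obeys $a_j\le C(n)(a_n+a_0)$; this is where the constant of the lemma actually comes from, and it has to be stated and justified rather than absorbed into ``tracking that the induction closes.''
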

\begin{proof}
 Set $a_k:=\int_{\lbr u<-S+\epsilon+t\rbr}\wph^k\we\om^{n-k}$. An application of Lemma \ref{bedtay} yields
\begin{align*}
 &a_k\leq a_{k+1}-\int_{\lbr u<-S+\epsilon+t\rbr}d^c(u+S-\epsilon-t)\we d(\wph^k\we\om^{n-k-1})\\
&\leq a_{k+1}+\int_{\lbr u<-S+\epsilon+t\rbr}d(u+S-\epsilon-t)\we d^c(\wph^k\we\om^{n-k-1})\\
&=a_{k+1}+\int_{\lbr u<-S+\epsilon+t\rbr}(\epsilon+t-S- u)\we \dc(\wph^k\we\om^{n-k-1}).
\end{align*}
The last term is bounded by $(\epsilon+t)B(a_k+a_{k-1}+a_{k-2})\leq\frac12(a_k+a_{k-1}+a_{k-2})$. Thus we obtain
\begin{equation}\label{ak}
 a_k\leq 2a_{k+1}+a_{k-1}+a_{k-2}.
\end{equation}
To finish the proof just observe that any sequence $\lbr a_j\rbr_{j=0}^n$ of non negative numbers satisfying (\ref{ak}) also satisfies
$$a_j\leq C(a_n+a_0)$$
with some $C$\ dependent only on $n$.
\end{proof}
Now coupling Theorem \ref{main1} with Lemma \ref{main2} and following the lines of Lemma 2.2 in \cite{koj03} we obtain the following result:
\begin{theorem}\label{main3}
 Let $u$\ be an $\om$-psh function with $sup_X u=0$, while $inf_{X} u=-S$. Suppose for some positive number $\alpha$ and an increasing function $h: \mathbb R_{+}\rightarrow (1,\infty)$ satisfying
$$\int_1^{\infty}(yh^{1/n}(y))^{-1}dy<\infty$$
the following inequality holds
\begin{equation}
 \int_K\wphn\leq F(\caw(K)),\ \ {\rm with}\ F(t)=\frac{\alpha t}{h(t^{-1/n})},\ \alpha >0,
\end{equation}\label{doca}
for any compact set $K$  Then for $D<\frac 1{2B}$ we have
$$D\leq\kappa(\caw(\lbr u<-S+D\rbr)),$$
where
$$\kappa(s):=c(n)\alpha^{1/n}(1+C)[\int_{s^{-1/n}}^{\infty}y^{-1}h^{-1/n}(y)dy+h^{-1/n}(s^{-1/n})].$$
\end{theorem}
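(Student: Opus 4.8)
\emph{Plan of proof.} The plan is to run the iteration of Lemma 2.2 in \cite{koj03}, replacing the (unavailable) comparison principle by its weak form, Theorem \ref{main1}, and then to show that the extra terms it produces do not spoil the scheme. Throughout I would set $g(s):=\caw(\{u<-s\})$, a non-increasing function of $s$, and $a(s):=g(s)^{1/n}$. First I would derive a two-level capacity inequality valid for $S-\frac1{2B}<s<s+t<S$. Applying Proposition \ref{bettercapest} with the constant competitor $v\equiv0$ and $A=1$, bounding each mixed term $\int\wph^k\wedge\om^{n-k}$ ($1\le k\le n-1$) by Lemma \ref{main2}, and finally inserting the hypothesis $\int_K\wphn\le F(\caw(K))$ on the set $K=\{u<-s\}$, one arrives at
$$t^n g(s+t)\le (1+C)\Big[F(g(s))+\int_{\{u<-s\}}\om^n\Big].$$
Here the reindexing is $\epsilon=S-s-t$, so that $\{u<-S+\epsilon+t\}=\{u<-s\}$ and the admissibility constraint $\epsilon+t<\frac1{2B}$ becomes $s>S-\frac1{2B}$; this is precisely why the conclusion is restricted to $D<\frac1{2B}$.

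The one genuinely new point, and the main obstacle compared with the K\"ahler case, is the error integral $\int_{\{u<-s\}}\om^n$. The crude bound $\int_K\om^n\le\caw(K)=g(s)$ is useless, since carrying a term $C\,g(s)$ through the iteration below costs a fixed amount at every step and makes the total diverge. Instead I would control it by Corollary \ref{lpcap} applied to the constant density $1\in L^p$, which gives $\int_{\{u<-s\}}\om^n\le C\,g(s)\exp(-C\,g(s)^{-1/n})$ with an exponent depending only on $(X,\om)$. This decay is at least as strong as that of $F$ --- indeed in the principal application $\wphn=f\om^n$ with $f\in L^p$ the bound $F$ itself comes from Corollary \ref{lpcap} with the \emph{same} universal exponent --- so the error is absorbed into the first term at the cost of enlarging the constant, leaving
$$t^n g(s+t)\le (1+C)\,\alpha\,c(n)\,\frac{g(s)}{h(g(s)^{-1/n})},\qquad S-\tfrac1{2B}<s<s+t<S.$$

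Taking $n$-th roots turns this into the Ko\l odziej inequality $t\,a(s+t)\le c_0\,a(s)\,h(a(s)^{-1})^{-1/n}$ with $c_0=\big((1+C)\alpha\,c(n)\big)^{1/n}$, and I would then run the standard adaptive iteration. Fix $D<\frac1{2B}$, put $s_0:=S-D$, $a_0:=a(s_0)=\caw(\{u<-S+D\})^{1/n}$ and $a_j:=2^{-j}a_0$. Choosing $t_j:=2c_0\,h(a_j^{-1})^{-1/n}$ and $s_{j+1}:=s_j+t_j$, the displayed inequality forces $a(s_{j+1})\le a_j/2=a_{j+1}$, so after infinitely many halvings $a(s_\infty)=0$ with $s_\infty=s_0+\sum_j t_j$. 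Since a non-empty open sublevel set has strictly positive capacity, the identity $\caw(\{u<-s_\infty\})=0$ can only hold for $s_\infty\ge S$; hence $D=S-s_0\le\sum_j t_j$. A routine check confirms that every $s_j$ stays in the admissible window $(S-\frac1{2B},S)$ until the step that reaches $S$, so the two-level inequality is legitimately applied at each stage.

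It remains to convert $\sum_j t_j$ into the claimed expression. Comparing the sum $\sum_{j\ge0}h(2^j a_0^{-1})^{-1/n}$ with the integral $\int_{a_0^{-1}}^{\infty}y^{-1}h(y)^{-1/n}\,dy$ by dyadic slicing and the monotonicity of $h^{-1/n}$, the tail $j\ge1$ is dominated by $\frac1{\log2}$ times that integral (finite by the hypothesis $\int_1^\infty(yh^{1/n}(y))^{-1}dy<\infty$), while the $j=0$ term contributes the separate summand $h(a_0^{-1})^{-1/n}=h(s^{-1/n})^{-1/n}$ with $s:=\caw(\{u<-S+D\})$. Absorbing $2$, $\frac1{\log2}$ and the $n$-th root of $c(n)$ into the dimensional constant yields exactly $D\le\kappa\big(\caw(\{u<-S+D\})\big)$, as required.
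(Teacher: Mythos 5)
Your proposal reconstructs exactly the argument the paper has in mind: the paper's entire ``proof'' of Theorem \ref{main3} is the sentence preceding it (``coupling Theorem \ref{main1} with Lemma \ref{main2} and following the lines of Lemma 2.2 in \cite{koj03}''), and your chain --- Proposition \ref{bettercapest} with the constant competitor and $A=1$, Lemma \ref{main2} to collapse the mixed terms onto $\wphn$ and $\om^n$, then the dyadic Ko\l odziej iteration with $t_j=2c_0h(a_j^{-1})^{-1/n}$ and the comparison of $\sum_{j}h(2^ja_0^{-1})^{-1/n}$ with the integral defining $\kappa$ --- is precisely that, carried out with correct bookkeeping, including the identification of where the restriction $D<\frac1{2B}$ enters and the topological step ($\{u<-s_\infty\}$ open and nonempty would have positive capacity).

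The one step that does not hold as stated is the absorption of the volume error term $\int_{\{u<-s\}}\om^n$ into $F(\caw(\{u<-s\}))$. Corollary \ref{lpcap} with $f\equiv1$ gives $\int_K\om^n\leq C\caw(K)\exp\bigl(-C\caw(K)^{-1/n}\bigr)$, so your claim that this ``decay is at least as strong as that of $F$'' amounts to the inequality $h(y)\leq C'e^{Cy}$ for all $y$, which is \emph{not} implied by the hypotheses: the condition $\int_1^\infty(yh^{1/n}(y))^{-1}dy<\infty$ admits, say, $h(y)=e^{e^y}$, for which $F$ decays far faster than the volume term and the absorption fails, leaving a $\kappa$ much larger than the one in the statement. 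What the argument actually yields is the two-level inequality with $h$ replaced by $\tilde h(y):=\min\bigl(h(y),e^{Cy}\bigr)$; since $\tilde h$ is still increasing, still satisfies the integrability hypothesis, and still gives $\kappa_{\tilde h}(s)\to0$ as $s\to0$, this is enough for every application in the paper (in particular for $L^p$ densities, where the exponent already comes from Corollary \ref{lpcap}), but it proves the theorem with $\kappa_{\tilde h}$ in place of $\kappa_h$. This imprecision is inherited from the statement itself --- the paper's one-line proof does not address it either --- but you should not present the absorption as automatic; either impose $h(y)\leq Ce^{Cy}$ or restate the conclusion with $\tilde h$.
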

As in the K\"ahler case Theorem \ref{main3} together with universal capacity estimates for sublevel sets (Proposition \ref{capest}) yields a priori estimates for $u\in\wpsh$ having Monge-Amp\`ere measure dominated by capacity.

\section{Weak solutions}
Here we prove the existence of continuous solutions of the complex Monge-Amp\`ere equation for nonnegative right hand side which belongs to some Orlicz spaces (including $L^p$, $p>1$)  working under the assumption (\ref{necessary}).

\begin{lemma}\label{unif}
Assume that $\omega$ satisfies (\ref{necessary}) and that $u_j \in PSH(X, \om )\cap C(\bar{X})$
 is a uniformly bounded sequence
converging weakly to $u\in PSH(\om )$.
Suppose further that for $h$  as in Theorem \ref{main3}
$$
\wphj^n =f_j\, \om ^n ,
$$
with $f_j $ satisfying
\begin{equation}
 \int_K f_j \om ^n \leq F(\caw(K)),\ \ {\rm with}\ F(t)=\frac{\alpha t}{h(t^{-1/n})},\ \alpha >0,
\end{equation}
 and such that all $f_j$ belong to the Orlicz space $L^{\psi }(X),$ where $\frac{\psi (x)}{x}$
is increasing to $\infty$ as $x$ goes to $\infty .$
 Then $u_j \to u $ uniformly
in $X.$
\end{lemma}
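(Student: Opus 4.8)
The plan is to reduce to the K\"ahler situation of \cite{koj03}, \cite{koj05}. Since $\om$ satisfies (\ref{necessary}), the genuine comparison principle (the first Proposition of Section~3) holds, so the Hermitian error terms of Theorem~\ref{main1} disappear and the stability machinery can be run essentially as there. The key tool is the \emph{comparison form} of Theorem~\ref{main3}, obtained by repeating the proofs of Proposition~\ref{bettercapest}, Lemma~\ref{main2} and Theorem~\ref{main3} with the constant competitor replaced by an arbitrary bounded $\om$-psh function $w$ (Proposition~\ref{bettercapest} is already stated for a general $v$, and the clean comparison principle controls the mixed masses that Lemma~\ref{main2} treated only for $v$ constant): for any bounded $\om$-psh $u$ whose measure $\wphn=f\om^n$ is dominated by capacity through $F$, and any bounded $\om$-psh $w$, writing $m:=sup_X(w-u)$,
\begin{equation*}
D\le\kappa\big(\caw(\{u<w-m+D\})\big)\qquad\text{for all }D<\tfrac1{2B},
\end{equation*}
with the same $\kappa$ as in Theorem~\ref{main3}; taking $w\equiv 0$ recovers Theorem~\ref{main3}.

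Granting this estimate, the two-sided uniform bound follows from the behaviour of $\kappa$. Fix $D_0<\tfrac1{2B}$ and apply the estimate with $u=u_j$, $w=u$ (valid since $\wphnj$ is $F$-dominated), so $m=sup_X(u-u_j)$. If $m\ge 2D_0$ then $\{u_j<u-m+D_0\}=\{u-u_j>m-D_0\}\subseteq\{|u_j-u|>D_0\}$, whence by monotonicity of $\caw$ and of $\kappa$,
\begin{equation*}
D_0\le\kappa\big(\caw(\{|u_j-u|>D_0\})\big).
\end{equation*}
The hypothesis $\int_1^{\infty}(yh^{1/n}(y))^{-1}dy<\infty$ forces $\kappa(s)\to0$ as $s\to0^{+}$ (both the tail integral and $h^{-1/n}(s^{-1/n})$ tend to $0$), and $\kappa$ is increasing; hence there is $\tau(D_0)>0$ such that $\caw(\{|u_j-u|>D_0\})<\tau(D_0)$ already forces $m<2D_0$, i.e. $sup_X(u-u_j)<2D_0$. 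To bound $sup_X(u_j-u)$ one applies the same estimate with the roles reversed, $u$ now being the reference solution; this requires knowing that the weak limit also solves an $F$-dominated equation.

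This last point, and the passage to the limit, is where the remaining hypotheses enter. First, the weak convergence is upgraded to convergence in capacity: a uniformly bounded sequence of $\om$-psh functions converging in $L^1(X)$ converges in $\caw$, which follows from the local Bedford--Taylor theory \cite{BT2} via the psh-like property and the equicontinuity of $\caw$ and $\caw'$ from Section~2. Convergence in capacity gives $\wphnj\to\wphn$ weakly, and the Orlicz hypothesis $\psi(x)/x\nearrow\infty$ supplies, through the de la Vall\'ee--Poussin criterion, the uniform integrability of $\{f_j\}$; consequently the weak $L^1$ limit $\wphn=f\om^n$ has $f$ again dominated by capacity through the same $F$. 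Thus the estimate of the previous paragraph applies in both directions and yields $\|u_j-u\|_{\infty}<2D_0$ as soon as $\caw(\{|u_j-u|>D_0\})<\tau(D_0)$. Since $D_0<\tfrac1{2B}$ was arbitrary and $\caw(\{|u_j-u|>D_0\})\to0$, we conclude $u_j\to u$ uniformly. (Alternatively, one avoids discussing the limiting equation by comparing the pairs $(u_j,u_k)$ directly via the estimate above, obtaining that $\{u_j\}$ is uniformly Cauchy.)

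The main obstacle is the comparison form of Theorem~\ref{main3}: for a non-constant reference $w$ the mixed masses $\int\om_u^k\we\om^{n-k}$ that Lemma~\ref{main2} controlled only for constant competitors must now be estimated over the sublevel sets of $u-(1-t)w$, and it is exactly assumption (\ref{necessary})---via the genuine comparison principle---that cancels the Hermitian correction terms and makes this step proceed as in the K\"ahler case. The Orlicz condition is not needed for the stability estimate itself but for identifying the limit: it guarantees that no Monge--Amp\`ere mass is lost in passing $\wphnj\to\wphn$, so that $u$ is again an admissible, capacity-dominated solution against which the comparison estimate may be run.
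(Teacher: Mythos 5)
Your overall architecture --- a relative capacity estimate of the type of Theorem \ref{main3} together with the decay of $\kappa$ at $0$ --- is the right one, and the relative version of Theorem \ref{main3} with a general bounded $\om$-psh reference in place of a constant is indeed what the paper implicitly invokes when it applies ``Theorem 4.3'' to the sets $E_j(\delta+A\eta)=\{u_j+\delta+A\eta\le u\}$. The genuine gap is in how you feed that estimate: you claim that a uniformly bounded sequence of $\om$-psh functions converging in $L^1(X)$ converges in the capacity $\caw$, and you use $\caw(\{|u_j-u|>D_0\})\to 0$ as the input that makes $\kappa$ small. This implication is false in general (already for ordinary bounded psh functions on a ball: there are uniformly bounded psh $u_j\to u$ in $L^1$ whose Monge--Amp\`ere measures do not converge weakly, whereas convergence in capacity would force such convergence), and if it were available the lemma would be nearly immediate. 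Hartogs' lemma does make $\{u_j>u+D_0\}$ eventually empty, but for $\{u-u_j>D_0\}$ the only thing $L^1$ convergence gives is $\int_X(u-u_j)_+\om^n\to 0$, and since $(u-u_j)_+$ is not quasi-psh no Chern--Levine--Nirenberg type argument converts this into a bound on its capacity.

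The paper closes exactly this hole by using the equation. With $V=\{u_j\le\eta v_j+(1-\eta)u-\delta\}$ squeezed between two sublevel sets of $u_j-u$, the genuine comparison principle gives
$$\eta^n\,\caw(E_j(\delta+A\eta))\le 2\int_{E_j(\delta)}f_j\,\om^n\le\frac{2}{\delta}\int_X(u-u_j)_+f_j\,\om^n,$$
and it is here --- not in ``identifying the limit'' --- that the Orlicz hypothesis enters: splitting the last integral over $\{f_j>M\}$ and $\{f_j\le M\}$, the monotonicity of $\psi(x)/x$ and the uniform bound on $\int_X\psi(f_j)\om^n$ make the first piece small uniformly in $j$ for $M$ large, while $L^1$ convergence of $u_j$ kills the second. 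So the Orlicz condition is precisely what converts $L^1$ smallness of $(u-u_j)_+$ into capacity smallness of the bad set; you have the right ingredient (uniform integrability of the $f_j$) but deploy it in the wrong place. Note also that the reverse inequality $\sup_X(u_j-u)\to 0$ requires no equation for the limit $u$ at all --- it is Hartogs' lemma via the psh-like property --- so your discussion of whether $u$ itself solves an $F$-dominated equation can be dispensed with.
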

\begin{proof} Assume $0< u<A-1$. Choose $\de >0$ and $\et =\min (\de , 1/(4A))$ so that
the set $E_j (\de +2A\et ) =\{u_j +\de +2A\et \leq u \}$ is nonempty, whereas
the set $E_j (\de +3A\et ) =\{u_j +\de +3A\et\leq u \}$ is empty.
Denote by $a_j (\de +A\et ) = \caw (E_j (\de +A\et )) $ the  capacity
of the set
$E_j (\de +A\et)$. Let
$v_j$ be a $PSH(X, \om )$ function satisfying $-1<v_j <0$ and
$$
\int_{E_j (\de +A\et ) }(dd^c v_j +\om )^n \geq a_j (\de +A\et )/2 .
$$
Observe that for $V=\{ u_j \leq \et v_j +(1-\et )u-\de \}$
the following inclusions are true
$$
E_j (\de +A\et ) \subset V\subset E_j (\de ).
$$
Applying the comparison principle  we get
$$\aligned
a_j (\de +A\et ) \et ^n &\leq \int _{E_j (\de +A\et ) } (dd^c  (\et v_j +(1-\et )u ) +\om )^n \leq \int _V \wphj^n \\
&\leq \int_{E_j (\de ) }f_j  \om ^n  .
\endaligned
$$
Therefore, using the notation $u_+ :=\max (u,0)$    we have for any $M>0$
$$\aligned
a_j (\de +A\et ) \et  ^{n}\de &\leq \int _{X } (u-u_j )_+ f_j \om ^n  \\
=& \int _{\{ f_j >M \}  } (u-u_j )_+ f_j \om ^n  +
\int _{\{ f_j \leq M\}  } (u-u_j )_+ f_j \om ^n \\
&\leq \max _{X}(u-u_j )_+ \int _{\{ f_j >M \}  }  f_j \om ^n
+M\int _{X } (u-u_j )_+  \om ^n  \\
&\leq \max _{X}(u-u_j )_+ \frac{M}{\psi (M)} \int _{X  } \psi
f_j \om ^n  +M\int _{\Om  } (u-u_j )_+  \om ^n  .
\endaligned\label{last}
$$

For the last inequality we use the assumption that $\frac{\psi (x)}{x}$ is increasing.
 Fix $\ep >0 .$ By the assumptions and the $L^{\infty}$ estimates above the quantities
$$
\max _{X}(u-u_j )_+
\int _{X} \psi f_j \om ^n
$$
are uniformly bounded. Using the assumptions on $\psi $ we can make
$\frac{M}{\psi (M)}$ arbitrarily small by taking $M$ big enough.
We choose $M$ so that the first term on the right hand side of the last estimate
 is less than $\ep /2 $ for any $j$. Since $u_j \to u$ in $L^1 (\Om )$, by the psh-like property, the other
term is less than $\ep /2 $ for $j>j_0 $. Therefore
$$
a_j (\de +A\et ) \leq \ep \et ^{-n } \de ^{-1}\ \ \text{ for  } \ j>j_0 .
$$
Since $E_j (\de +2A\et )$ is nonempty.
Then, applying Theorem 4.3  we obtain
$$
\et \leq \kappa (a_j (\de +A\et )) \leq \kappa (\ep  \et ^{-n}\de ^{-1 } ),\ \ j>j_0 .
$$
Since, by the assumption on $h$ we have $\lim _{s\to 0}\kappa (s)=0$
the last inequality yields a contradiction if we take $\ep $ small
enough. Therefore, using the psh-property and the Hartogs lemma we conclude that
$u_j$ tend to $u$ uniformly.
\end{proof}

\begin{theorem} With $\omega$ satisfying (\ref{necessary}) take
$f $ such that
\begin{equation}
\int_X f \om ^n =\int_X  \om ^n ,\ \
 \int_K f \om ^n \leq F(\caw(K)),\ \ {\rm with}\ F(t)=\frac{\alpha t}{h(t^{-1/n})},\ \alpha >0,
\end{equation} for $h$ as in Theorem \ref{main3}, and such that $f$ belongs to the Orlicz space $L^{\psi }(X),$ where $\frac{\psi (x)}{x}$
is increasing to $\infty$ as $x$ goes to $\infty .$ Then there exists continuous function $u\in PSH(X, \om )$ that fulfils the equation
$$
\wph ^n  =f\, \om ^n .
$$
\end{theorem}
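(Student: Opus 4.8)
The plan is to obtain $u$ as a uniform limit of smooth solutions of regularized equations, using the a priori estimate of Theorem~\ref{main3} for compactness and Lemma~\ref{unif} for the uniformity of the convergence.

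First I would replace $f$ by a sequence of smooth, strictly positive functions $f_j$ with $\int_X f_j\,\om^n=\int_X\om^n$ and $f_j\to f$ in $L^1(\om^n)$, chosen so that both structural hypotheses persist uniformly in $j$: the capacity domination $\int_K f_j\,\om^n\le F(\caw(K))$ for every compact $K$, and a uniform bound $\int_X\psi(f_j)\,\om^n\le C$ in the Orlicz space $L^\psi$. For each $j$ the smooth existence theory of Tosatti--Weinkove \cite{TW2} produces a smooth $\om$-psh $u_j$ and a constant $b_j>0$ with $\wphnj=b_j f_j\,\om^n$; after normalizing $\sup_X u_j=0$, assumption (\ref{necessary}) gives $\int_X\wphnj=\int_X\om^n$, so $b_j\int_X f_j\,\om^n=\int_X\om^n$ and therefore $b_j=1$. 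Hence each $u_j$ solves $\wphnj=f_j\,\om^n$ exactly.

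Next I would establish a uniform $L^\infty$ bound. Put $S_j=-\inf_X u_j$, so $\sup_X u_j=0$. The uniform capacity domination lets me apply Theorem~\ref{main3} to each $u_j$: for a fixed $D<\tfrac1{2B}$ we get $D\le\kappa\big(\caw(\lbr u_j<-S_j+D\rbr)\big)$. On the other hand $\lbr u_j<-S_j+D\rbr=\lbr u_j<-(S_j-D)\rbr$, so Proposition~\ref{capest} gives $\caw(\lbr u_j<-S_j+D\rbr)\le C/(S_j-D)$ whenever $S_j>D$. Since $\kappa$ is increasing with $\kappa(s)\to0$ as $s\to0$, these two inequalities force $S_j-D\le C/\kappa^{-1}(D)$, a bound independent of $j$; thus $-C'\le u_j\le0$ for all $j$. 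By the uniform integrability of normalized $\om$-psh functions (Proposition~\ref{l1int}) and the psh-like property, a subsequence of $u_j$ converges in $L^1(\om^n)$ to a bounded $u\in\wpsh$.

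Finally, this subsequence meets all the hypotheses of Lemma~\ref{unif}: it is uniformly bounded, continuous, converges weakly to $u\in\wpsh$, and the densities $f_j$ satisfy the capacity domination and the Orlicz bound uniformly. Lemma~\ref{unif} then yields $u_j\to u$ uniformly, so $u$ is continuous. Uniform convergence of uniformly bounded $\om$-psh functions forces $\wphnj\to\wphn$ weakly, via Bedford--Taylor theory carried through the Newton expansion (\ref{de}), while $f_j\,\om^n\to f\,\om^n$; passing to the limit gives $\wphn=f\,\om^n$. I expect the regularization to be the main obstacle: producing smooth $f_j$ that keep the mass normalization exact while preserving both the capacity-domination inequality and the Orlicz bound uniformly. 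The persistence of the capacity domination under mollification is the delicate point; it is controlled by the uniform integrability encoded in $\psi(x)/x\uparrow\infty$ (and, in the model case $f\in L^p$, $p>1$, follows directly from Corollary~\ref{lpcap} applied with a uniform $L^p$ bound on the $f_j$).
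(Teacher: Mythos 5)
Your proposal is correct and follows essentially the same route as the paper: regularize $f$ by smooth positive $f_j$, solve via \cite{TW2}, get uniform bounds from the Section 4 a priori estimates, upgrade $L^1$ convergence to uniform convergence via Lemma \ref{unif}, and pass to the limit in the Monge--Amp\`ere operator. You usefully fill in two details the paper leaves implicit --- that the normalization constant from the smooth existence theorem is forced to equal $1$ by (\ref{necessary}), and that the capacity domination and Orlicz bound must be preserved uniformly under the regularization --- but the argument is the same.
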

\begin{proof}
It follows from \cite{TW2} \cite{GL2} that the theorem is true for smooth positive $f$.
Thus we approximate $f$ by smooth positive functions $f_j$ in $L^{\psi },$
and obtain solutions $u_j$.
A priori estimates from Section 4 yield uniform bounds for $u_j$. Passing to a subsequence we can therefore assume that $u_j \to u$ in $L^1 (X)$. The last lemma says that the convergence of functions is uniform. The statement now follows by monotone convergence theorem (see Preliminaries).
\end{proof}

\section{A remark on the uniqueness}
In \cite{TW2} the authors have proved that if $u, v$ are smooth $\om$-psh functions and their Monge-Amp\`ere measures satisfy $\wphn=e^{F+b}\om^n,\ \wpsn=e^{F+c}\om^n$ for some smooth function $F$ and some constants $b$ and $c$ then in fact $b=c$ and $u$ and $v$ differ by a constant. This corresponds to the uniqueness of potentials in the Calabi conjecture from the K\"ahler case.

Below we give an alternative proof of the fact that $b=c$ which is in the spirit of pluripotential theory:

Suppose on contrary that
$$\wphn=e^{F+b}\om^n,\ \wpsn=e^{F+c}\om^n$$
for some $u, v$ and without loss of generality assume that $c>b$.

Consider the Hermitian metric $\om+\dc u$. Since by assumptions above it is smooth and strictly positive one finds a unique Gauduchon function $\phi_{u}$, such that $$inf_X\phi_{u}=0,\ \dc(e^{(n-1)\phi_{u}}(\om+\dc u)^{n-1})=0 .$$

Now one can apply the comparison principle for the Laplacian with respect to \newline $e^{\phi_{u}}(\om+\dc u)$ yielding
$$\inpp e^{(n-1)\phi_{u}}(\om+\dc u)^{n-1}\we\wps\leq\inpp e^{(n-1)\phi_{u}}\wphn.$$
Exchanging now $v$\ with $v+C$\ (which does not affect the reasoning above) for big enough $C$ one obtains
\begin{equation}\label{uniqma1}
\int_Xe^{(n-1)\phi_{u}}(\om+\dc u)^{n-1}\we\wps\leq\int_X e^{(n-1)\phi_{u}}\wphn.
\end{equation}
However the left hand side can be estimated from below using (pointwise) the inequality for mixed Monge-Amp\`ere measures (see \cite{Di1} for a discussion on the topic):
$$\int_Xe^{(n-1)\phi_{u}}(\om+\dc u)^{n-1}\we\wps\geq \int_Xe^{(n-1)\phi_{u}+\frac{(c-b)}n}\wphn.$$
Coupling the above estimates one obtains
$$0<e^{\frac{(c-b)}n}-1\leq 0,$$
a contradiction.
\begin{observation}\label{obs}
Applying the Calabi argument directly, as in \cite{TW1} one obtains that $v-u$ would be a subharmonic function with respect to some elliptic operator (dependent on $v, u$). Thus it might be possible to conclude the result directly in this manner. Note however that Calabi's argument depends heavily on the smoothness of $u$ and $v$ and even in the K\"ahler case extending uniqueness to non smooth solutions is a highly nontrivial matter (we refer to \cite{Di2} for a discussion and to \cite{Bl} and \cite{Di2} for the proofs of uniqueness of bounded and general functions respectively). In our approach the smoothness is needed only to produce Gauduchon function, which perhaps will be easier to generalize in the non-smooth setting.
\end{observation}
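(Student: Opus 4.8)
The plan is to make precise the two claims contained in the remark: that $w:=v-u$ solves a second order linear elliptic equation with no lower order terms, and that this equation forces first $b=c$ and then $w$ constant. First I would set $w:=v-u$, so that $\wps=\wph+\dc w$, and invoke the telescoping identity
\begin{equation*}
\wpsn-\wphn=\dc w\we T,\qquad T:=\sum_{k=0}^{n-1}\wps^k\we\wph^{n-1-k}.
\end{equation*}
Because $u,v$ are smooth and $\wph,\wps$ are strictly positive $(1,1)$-forms, every summand $\wps^k\we\wph^{n-1-k}$, and hence $T$, is a smooth strictly positive $(n-1,n-1)$-form. Feeding in the hypotheses $\wphn=e^{F+b}\om^n$ and $\wpsn=e^{F+c}\om^n$ turns the identity into
\begin{equation*}
\dc w\we T=e^{F}(e^{c}-e^{b})\,\om^n.
\end{equation*}

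Next I would read the left-hand side as an elliptic operator acting on $w$. The ratio $Lw:=\dc w\we T/\om^n$ of top-degree forms is a well-defined function, and in local coordinates $Lw=\sum_{p,q}a^{p\bar q}w_{p\bar q}$, where $(a^{p\bar q})$ is a positive definite Hermitian matrix determined by $T$. Hence $L$ is a genuine second order elliptic operator, with coefficients depending on $u$ and $v$ but carrying neither a first order nor a zeroth order term; this is exactly the sense in which $w$ is subharmonic with respect to an operator depending on $v,u$. The relation above then reads $Lw=e^{F}(e^{c}-e^{b})$.

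The conclusion would come from the maximum principle. If $c>b$ the right-hand side is everywhere strictly positive, so $Lw>0$ on all of $X$; but at a point where $w$ attains its maximum the complex Hessian $(w_{p\bar q})$ is negative semidefinite, whence $\dc w\we T\leq 0$ and $Lw\leq 0$ there---a contradiction. Interchanging $u$ and $v$ rules out $c<b$, so $b=c$. Once $b=c$ the equation reduces to $Lw=0$, and the strong maximum principle for an elliptic operator without zeroth order term shows that $w$ cannot attain an interior extremum without being constant; hence $w$ is constant, which is the full uniqueness assertion.

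The main obstacle---and the reason the remark only says it ``might be possible''---is that every step is pointwise and presupposes $u,v\in C^2$: the telescoping identity uses the genuine Hessian, ellipticity of $L$ needs continuous coefficients, and the contradiction rests on the classical maximum principle. For merely bounded or continuous $\om$-psh solutions none of these is available, and even in the K\"ahler case dispensing with smoothness is delicate (cf. \cite{Bl}, \cite{Di2}). This is precisely why the pluripotential argument given above is preferable: there smoothness enters only through the construction of the Gauduchon function $\phi_u$, which seems more likely to survive in a non-smooth treatment.
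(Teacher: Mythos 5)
Your proposal is correct, and it is a faithful, fully detailed version of exactly the argument the remark only gestures at. The telescoping identity $\om_v^n-\om_u^n=\dc w\we T$ with $T=\sum_{k=0}^{n-1}\om_v^k\we\om_u^{n-1-k}$ is indeed the device behind the Calabi argument; $T$ is strictly positive because each of $\om_u,\om_v$ is nonnegative with strictly positive top power (the right hand sides $e^{F+b}\om^n$, $e^{F+c}\om^n$ are positive), and no closedness of $\om$ is ever used, so the pointwise reading $Lw=e^{F}(e^{c}-e^{b})$ with $L$ elliptic and free of lower order terms, together with the maximum principle, goes through verbatim on a Hermitian manifold.

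That said, be aware that what you proved is the content of the remark, not the paper's own uniqueness argument, which deliberately avoids the maximum principle: the paper produces the Gauduchon function $\phi_u$ of the metric $\om+\dc u$, applies the Laplacian comparison principle with respect to $e^{\phi_u}(\om+\dc u)$ to get $\int_X e^{(n-1)\phi_u}(\om+\dc u)^{n-1}\we\wps\leq\int_X e^{(n-1)\phi_u}\wphn$, and then bounds the left side from below by $\int_X e^{(n-1)\phi_u+(c-b)/n}\wphn$ via the pointwise mixed Monge--Amp\`ere inequality, reaching the contradiction $0<e^{(c-b)/n}-1\leq 0$. The trade-off between the two routes is instructive and is the very point of the remark: your route yields more in the smooth setting --- it delivers $b=c$ and the constancy of $w=v-u$ in one stroke, whereas the paper proves only $b=c$ and cites \cite{TW2} for constancy --- but it consumes smoothness at every step (the genuine Hessian in the telescoping identity, continuity and strict positivity of the coefficients $a^{p\bar q}$, the classical strong maximum principle), while the paper's integral argument confines the use of smoothness to the single construction of the Gauduchon factor. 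Your closing paragraph identifies precisely this obstruction, so your account matches the remark's intent as well as its substance.
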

\bigskip

S\l awomir Dinew:
Institute of Mathematics,
Jagiellonian University,
ul. \L ojasiewicza 6,
30-348 Krak\'ow,
Poland
{\tt slawomir.dinew@im.uj.edu.pl}\\
Current adress:
Erwin Schr\"odinger Institute for mathematical physics, Boltzmanngasse 8, A2010 Vienna, Austria

\bigskip
S\l awomir Ko\l odziej:
Institute of Mathematics,
Jagiellonian University,
ul. \L ojasie\-wi\-cza 6, 30-348 Krak\'ow, Poland
{\tt slawomir.kolodziej@im.uj.edu.pl}

\begin{thebibliography}{EGZ}
\bibitem{Bl} Z. B\l ocki, {\it Uniqueness and stability for the Monge-Amp\`ere equation on compact K\"ahler manifolds,} Indiana Univ. Math. J. {\bf 52} (2003), no. 6 1697-1701.
\bibitem{Bl1} Z. B\l ocki, {\it On uniform estimate in the Calabi-Yau theorem,} Proc SCV2004, Beijing, Science in China Series A Mathematics {\bf 48}, 92005), 244-247.
\bibitem{BT1} E. Bedford and B. A. Taylor, {\it The Dirichlet problem for a complex Monge-Amp\`ere operator,} Invent. Math. {\bf37} (1976), 1-44.
\bibitem{BT2} E. Bedford and B. A. Taylor, {\it A new capacity for plurisubharmonic functions,} Acta Math. {\bf149} (1982), 1-40.
\bibitem{Ch} P. Cherrier, {\it \'Equations de Monge-Amp\`ere sur les vari\'et\'es Hermitiennes compoactes,} Bull. Sc. Math {\bf 111} (2) (1987), 343-385.
\bibitem{CH1} P. Cherrier, {\it Le probl\`eme de Dirichlet pour des \'equations de Monge-Amp\`ere complexes modifi\'ees,} J. Funct. Anal. {\bf156} (1998), 208-251.
\bibitem{CH2} P. Cherrier, {\it Le probl\`eme de Dirichlet pour des \'equations de Monge-Amp\`ere  en m\'etrique hermitienne,} Bull. Sci. Math. {\bf123} (1999), 577-597.
\bibitem{dem} J.-P. Demailly, {\it Complex analytic and differential geometry,} self published e-book.
\bibitem{Di1} S. Dinew, {\it An inequality for mixed Monge-Amp\`ere measures,} Math. Zeit. {\bf 262} (2009), 1-15.
\bibitem{Di2} S. Dinew, {\it Uniqueness in $\mathcal E(X,\omega)$,} J. Funct. Anal. {\bf 256} (7) (2009), 2113-2122.
\bibitem{DNS} T. C. Dinh, V. A. Nguyen and N. Sibony, {\it Exponential estimates for plurisubharmonic functions and stochastic dynamics,} preprint arXiv 0801.1983.
\bibitem{EGZ} P.Eyssidieux, V.Guedj and A.Zeriahi, {\it Singular K\"ahler-Einstein matrics,} Journal AMS {\bf 22} (2009), 607-639.
\bibitem{FS} J. E. Fornaess and N. Sibony, {\it Harmonic currents of finite energy and laminations,} GAFA {\bf 15} (2005), 962-1003.
\bibitem{FY} J.-X. Fu and S.-T. Yau, {\it The theory of superstring with flux on
    non-K\"ahler manifolds and  the complex Monge-Amp\`ere equation,} J. Diff. Geom. {\bf98} (2008), 369-428.
\bibitem{GA} P. Gauduchon, {\it La th\'eor\`eme de l'excentricit\'e nulle,} C. R. Acad. Sci. Paris {\bf 285} (1977), 387-390.
\bibitem{GL} B. Guan and Q. Li, {\it Complex Monge-Amp\`ere equations on Hermitian manifolds,} preprint arXiv: 0906.3548v1.
\bibitem{GL2} B. Guan and Q. Li, {\it Complex Monge-Amp\`ere equations and totally real submanifolds,} preprint arXiv: 0910.1851.
\bibitem{GZ1} V.Guedj and A.Zeriahi, {\it Intrinsic capacities on compact K\"ahler manifolds} J. Geom. Anal. {\bf 15} (2005), 607-639.
\bibitem{GZ2} V.Guedj and A.Zeriahi, {\it The weighted Monge-Amp\`ere energy of quasiplurisubharmonic functions,} J. Funct. Anal. {\bf 250} (2) (2007), 442-482.
\bibitem{H1} A. Hanani, {\it \'Equations du type de Monge-Amp\`ere sur les vari\'et\'es hermitiennes compactes,} J. Funct. Anal. {\bf137} (1996), 49-75.
\bibitem{H2} A. Hanani, {\it Une g\'en\'eralisation de l'\'equation de Monge-Amp\`ere sur les vari\'et\'es hermitiennes compactes,} Bull. Sci. Math. {\bf120} (1996), 215–252.
\bibitem{koj98} S. Ko\l odziej, {\it The complex Monge-Amp\`ere equation,} Acta Math. {\bf 180} (1998), 69-117.
\bibitem{koj03} S. Ko\l odziej, {\it The Monge-Amp\`ere equation on compact K\"ahler manifolds,} Indiana Univ. Math. J. {\bf52} (2003), no. 3, 667-686.
\bibitem{koj05} S. Ko\l odziej, {\it The complex Monge-Amp\`ere equation and pluripotential theory,} Memoirs Amer. Math. Soc. {\bf 178} (2005), pp. 64.
\bibitem{koj08} S. Ko\l odziej, {\it H\"older continuity of solutions to the complex Monge-Amp\`ere equation with the right hand side in $L^p$.
    The case of compact K\"ahler manifolds,} Math. Ann. {\bf342} (2008), 379-386.
\bibitem{StT} J. Streets and G. Tian, {\it A parabolic flow of pluriclosed metrics,} preprint 0903.4418.
\bibitem{TW1} V. Tosatti and B. Weinkove, {\it Estimates for the complex Monge-Amp\`ere equation on Hermitian and balanced manifolds,} preprint arXiv: 0909.4496v2.
\bibitem{TW2} V. Tosatti and B. Weinkove, {\it The complex Monge-Amp\`ere equation on compact Hermitian manifolds,} preprint arXiv: 0910.1390v1.
\bibitem{TWY} V. Tosatti, B. Weinkove and S.-T.Yau, {Taming symplectic forms and the Calabi-Yau equation,} Proc. London Math. Soc. {\bf97} (2) (2008), 401-424.
\bibitem{Y} S.-T. Yau, {On the Ricci curvature of a compact K\"ahler manifold and the complex Monge-Amp\`ere equation,} Comm. Pure Appl. Math. {\bf 31} (30 (19780, 339-411.
\end{thebibliography}
\end{document}